\newtheorem{thm}{Theorem}[section]
\newtheorem{prop}[thm]{Proposition}
\newtheorem{lemma}[thm]{Lemma}
\newtheorem{remark}[thm]{Remark}
\newtheorem{letterthm}{Theorem}
\renewcommand{\Re}{\operatorname{Re}}
\newcommand{\mathcalSsumdef}{\mathfrak{S}}
\newcommand{\sumk}{\sum_{\substack{\kestermann = 1 \\ \gcd(\kestermann,\ell) = 1}}^\ell}
\title{Convolution identities for complex-indexed divisor functions and modular graph functions}
\author{Ksenia Fedosova and Kim Klinger-Logan}
\newcommand{\kf}[1]{{\color{magenta}{#1}}}
\begin{document}

\date{}

\maketitle

\begin{abstract}
We find exact identities for sums of the form 
\begin{equation*}\label{eq:convsumabs}
  \sum_{\stackrel{n_1+n_2 = n}{n_1 \in \mathbb{Z} \setminus \{ 0, n \} }}  \mathcal{Q}(n_1,n_2)  \sigma_{-r_1}(n_1) \sigma_{-r_2}(n_2),
  \end{equation*}
  where $n\in\mathbb{N}$, $r_1,r_2\in\mathbb{C}$, $\mathcal{Q}$ is a combination of hypergeometric functions, and $\sigma_{a}(x)$ denotes the divisor function.  Specifically,  
  we find that they can be expressed in terms of Fourier coefficients of Hecke cusp forms weighted by their $L$-values.   This result expands upon previous work with Radchenko in which such identities were found for divisor functions with  even integer index \cite{FKLR} and encompasses  results of Jacobi \cite{motohashi1994binary} and Diamantis and O'Sullivan in \cite{diamantis2010kernels, o2023identities} for divisor functions with odd integer index. The proof of our result expresses these sums in terms of Estermann zeta functions and uses trace formulae. In addition, we use a regularization of divergent convolution sums  to provide a mathematical explanation for $L$-values (non-critical in the sense of Deligne) appearing in modular graph functions  \cite{DKS2021_2}.  
\end{abstract}

\section{Introduction}

In this paper, we give a relation between  Fourier coefficients of modular forms and values of corresponding $L$-functions (not necessarily critical in the sense of Deligne) to convolution sums  
\begin{equation}\label{eq:convsum}
  \sum_{\stackrel{n_1+n_2 = n}{n_1 \in \mathbb{Z} \setminus \{ 0, n \} }}  \mathcal{Q}(n_1,n_2)  \sigma_{-r_1}(n_1) \sigma_{-r_2}(n_2),
  \end{equation}
  where $n\in\mathbb{N}$, $r_1,r_2\in\mathbb{C}$, $\mathcal{Q}$ is a combination of hypergeometric functions, and for  $x \in \mathbb{Z}\setminus \{0\}$ and $a \in \mathbb{C}$, $\displaystyle
    \sigma_{a}(x) = \sum_{\stackrel{d>0}{d | x }} d^a$ 
is the \textit{divisor function}. 
For example, we show \begin{equation}\label{eq:k=12,r1=1/3}
    \begin{split}
\sum_{ \stackrel{n_1 \in \mathbb{Z} \setminus \{ 0,n \}}{n_1+n_2=n}  } & \psi(n_1,n_2) \sigma_{-\frac{1}{3}}(n_1) \sigma_{-\frac{1}{3}}(n_2)  
\\ = &-  \frac{ n^5 \sigma_{\frac{1}{3}}(n) }{60} \left( \frac{(2 \pi)^{\frac{1}{3}} \zeta(\frac{2}{3})}{\Gamma( \frac{19}{3}) } + \frac{\zeta(\frac{4}{3}) n^{-\frac{1}{3}}  }{   (2\pi)^{\frac{1}{3}} \Gamma(\frac{17}{3}) }     \right)     
+   \frac{ 945 }{ 2^{\frac{52}{3}} \pi^{\frac{34}{3}} \Gamma(\frac{17}{3})   } \frac{L( 6,\Delta ) L( \frac{19}{3}, \Delta )}{\langle \Delta,  \Delta \rangle} \tau(n),     
    \end{split}
\end{equation}
where 
\begin{align}
\displaystyle\psi(n_1, n_2) =\frac{243 \sqrt{3}\,   }{25625600    } \left(\sqrt[3]{| n_1| }\, p(n_1,n_2)+\sqrt[3]{| n_2| } \,p(n_2,n_1)\right)
\end{align}
for $p(\alpha,\beta) =11 \alpha^5 -440\alpha^4 \beta  +2288 \alpha^3\beta^2  - 2860 \alpha^2 \beta^3 +910 \alpha\beta^4 -52 \beta^5$, $\zeta(s)$ is the Riemann zeta function, $\Gamma(s)$ is the Gamma function, and $L(s,\Delta)$ is the $L$-function of the weight 12 cuspform $\Delta(z) = \sum_{n\geq 1} \tau(n) e^{2\pi i n z}$.

Our motivation for considering convolution sums \eqref{eq:convsum} comes from computations in graviton scattering and $\mathcal{N}=4$ supersymmetric Yang-Mills theory \cite{CGPWW2021, SDK}. 
Specifically, a conjecture from string theory stated that, for certain~$\mathcal{Q}$, similar convolution sums vanish \cites{fedosova2024shifted, CGPWW2021, SDK}.
Explicitly, in 2021, Chester, Green, Pufu, Wang, and Wen conjectured that \cite[Section C.1(a)]{CGPWW2021}
\begin{equation}\label{eq:conj}
\begin{split}
\displaystyle\sum_{\stackrel{n_1, n_2 \in \mathbb{Z} \smallsetminus \{ 0\}}{n_1+n_2=n}} \varphi(n_1,n_2) & \sigma_2(n_1) \sigma_2(n_2) 
-
\left( \frac{\zeta(2) n^2}{2} + 30 \zeta'(-2)\right)\sigma_2(n) =0 ,
\end{split}
\end{equation}
where 
\begin{equation} 
\begin{split}
\varphi(n_1,n_2)= & -\frac{n_1^2}{4 n_2^2}-\frac{7 n_1}{2 n_2}-\frac{n_2^2}{4 n_1^2}-\frac{7 n_2}{2 n_1}+\frac{47}{2} 
+\left(15-\frac{30 n_1}{n_1+n_2}\right) \log \left|\frac{n_1}{n_2}\right|.
\end{split}
\end{equation}
In prior work with Danylo Radchenko, we refined and proved a generalization of this conjecture which surprisingly related convolutions sums involving even index divisor functions to Fourier coefficients of modular forms normalized by critical $L$-values \cite{FKLR}. 
However, 
our previous results were unable to give a complete account for similar anomalies appearing in other string theory computations. 
Specifically, work of Dorigoni, Kleinschmidt, and Schlotterer points towards similar identities underlying some unexplained $L$-values appearing in the theory of modular graph functions 
\cite{DKS2021_2}. The results in this paper  account for this phenomenon
by giving an exact relationship between certain shifted convolutions of divisor functions with indices in $\mathbb{C}$ to coefficients of modular forms normalized by particular $L$-values. 

Unlike the  main result in \cite{FKLR}, in this paper, the $L$-values which arise are not necessarily critical in the sense of Deligne.\footnote{ The critical values of $L(\cdot, f)$ in the sense of Deligne \cite{kontsevich2001periods} are $s=1, \ldots, k-1$ where $k$ is the weight of $f$.}  Deligne's conjectures state that critical $L$-values are expressible in terms of
algebraic numbers and certain canonical  transcendentals; however, other values are far less tractable \cite{kontsevich2001periods, diamantis2020kernels}.
In this paper we give exact identities involving not only critical $L$-values, but also values of $L$-functions evaluated at arguments which, in general, are not integers and may even be complex. These identities give new explicit closed-form expressions that evaluate to (not necessarily critical) values of 
$L$-functions of Hecke cuspforms. As a special case of our result, 
we recover terms which arise in the context of modular graph functions~\cite{DKS2021_2} using the regularization of divergent sums of the form \eqref{eq:convsum}.
As an example, we obtain that the  sum  
\begin{equation}
    \sum_{\stackrel{n_1+n_2=n}{n_1, n_2 \in \mathbb{Z} \setminus \{ 0, n\}  } } \text{sgn}(n_1) \left(n_2-n_1\right)\sigma_{-7}(n_1) \sigma_{-7}(n_2) 
\end{equation}
can be regularized and expressed in terms of $L(6,\Delta) L(13, \Delta)$ as seen in \eqref{eq:shameless_regularization} and \eqref{eq:shameless_regularization1}. 
The new ingredient in our approach is a hypergeometric regularization of divergent convolution sums (as opposed to the classical analytic regularization by factors such as $n_1^{-s}$). To our knowledge, this method has not previously appeared in the literature. The explicit result and its connection to modular graph functions are summarized in Section \ref{sec:mgf}, and the proof can be found in Section \ref{sec:regularization_of_divergent_sums}.

The main result of this investigation not only yields an extension of Theorem 1 in \cite{FKLR} to $r_1,r_2, d\in\mathbb{C}$ (not just $r_1,r_2\in2\mathbb{Z}$, $d \in \mathbb{N}$) but also encompasses  \textit{finite} convolution sums of Jacobi  referenced in \cite{motohashi1994binary} and  Diamantis and O'Sullivan in \cite[Section 4.2]{diamantis2010kernels} and \cite{o2023identities}. 
To formulate our main result, assume $n \in \mathbb{N}$ and  $n_1, n_2 \in \mathbb{Z}-\{0\}$ with $n_1+n_2 = n$. Let $G_{\pm}(n_1, n_2) = G_{d,\pm}^{(r_1, r_2)}(n_1, n_2)$ be defined by 
\begin{equation}\label{G:def_sksslldffdddd}
G_{\pm}(n_1, n_2) = \lim_{\varepsilon \to 0+} \, _2 F_1 (d+1,d+r_1+1;k;\frac{n_1+n_2}{n_1} \pm  i \varepsilon),
\end{equation} 
where $\, _2 F_1(\cdot, \cdot, \cdot; z)$ is the hypergeometric function, analytically  extended from the original domain of definition $|z|<1$ to $z \in \mathbb{C} \setminus [1, \infty)$. 
Define $\mathcal{Q}(n_1, n_2) = \mathcal{Q}^{(r_1, r_2)}_d(n_1, n_2)$ by
    \begin{equation}\label{eq:Q_convolution_form_main_theorem}
    \begin{split}
  \Gamma(k)  \left|\frac{n_1}{n}\right|^{d+1}  \mathcal{Q}(n_1,n_2) = & \left(  G_+(n_1, n_2)  - G_-(n_1, n_2)  \right) \cdot  i^{k+1} \sin( \pi r_2 / 2) \\
    + &  \left(  G_+(n_1, n_2)  + G_-(n_1, n_2) \right) \cdot \begin{cases}
       i^{k}  \cos(\pi r_2 / 2), \quad &n_1 > 0, \\
             \cos(\pi r_1 / 2), \quad &n_1 < 0
        \end{cases}
    \end{split}
\end{equation} 
and let 
\begin{equation}\label{def:Zabd}
         {Z}^{(\alpha,\beta)}_d =  \frac{ \zeta(1-\beta) }{ (2 \pi )^{-\beta}\Gamma (\alpha + \beta +d+1) \Gamma (d+\beta+1)  } +   \frac{\zeta(1+\beta)}{(2 \pi)^{\beta}\Gamma(d+1) \Gamma(d+\alpha+1)} n^{-\beta }.
\end{equation}

\begin{letterthm}
\label{thm:THEOREM-A} 
Let $n \in \mathbb{N}$, let $k$ be an even integer with $k \ge 6$ and 
let $r_1, r_2, d \in \mathbb{C}$ \ be such that 
\begin{equation}\label{eq:cond_1}
    r_1 + r_2 + 2 d + 2 = k\ \ \  \text{ and } \ \ \ \Re(r_1), \Re(r_2) \ge 0 \ \ \ \text{ and } \ \ \  \Re(d) > 0.
\end{equation}
Then, as an absolutely convergent series,
\begin{equation}\label{eq:first_main_thm_statement_sdfsdffff}
\begin{split}
    \sum_{\stackrel{n_1+n_2 = n}{n_1 \in \mathbb{Z} \setminus \{ 0, n \} }} & \mathcal{Q}(n_1,n_2)  \sigma_{-r_1}(n_1) \sigma_{-r_2}(n_2) 
   =   - i^{ k} \cdot  {Z}^{ (r_1,r_2) }_d\sigma _{-r_1}(n)    - {Z}^{(r_2,r_1)}_d\sigma_{-r_2}(n)    \\
& - i^{k} \frac{\Gamma(k-1)}{ (4 \pi )^{k-1}} \frac{1}{  (2 \pi)^{r_2} } \sum_{f \in \mathcal{F}_k}  \frac{L( d+r_2+1,f ) L( d+r_2+r_1+1, f ) }{ \Gamma(d+1) \Gamma(d+r_1+1)  \langle f,  f \rangle } \frac{a_f(n)}{n^{d+r_1+r_2}}
\end{split}
\end{equation}
where the summation runs over  $\mathcal{F}_k$ a space of normalized Hecke  cusp forms $f(z)=\sum_{n\geq 1}a_f(n)e^{2\pi i n z}$ of weight $k$, $L(\cdot, f)$ denotes the $L$-function associated to $f$, and $\langle \cdot, \cdot \rangle$ is the Petersson product. 
\end{letterthm}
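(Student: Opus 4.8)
The plan is to express the left-hand side through Estermann zeta functions and then invoke the Petersson trace formula in weight $k$, extending to complex $r_1,r_2,d$ the strategy used for even integer indices in \cite{FKLR}. First I would write each divisor function as the Dirichlet series $\sigma_{-r}(m)=\sum_{\delta\mid m}\delta^{-r}$ and recast the $\mathcal{Q}$-weighted sum over $n_1+n_2=n$ as a combination of values of Estermann zeta functions $\sum_{m\ge1}\sigma_{-r_i}(m)e^{2\pi i mh/q}m^{-s}$, whose functional equations will later supply the additive (Kloosterman) sums feeding the trace formula. The role of the hypergeometric weight $\mathcal{Q}$ in \eqref{eq:Q_convolution_form_main_theorem} is to reproduce exactly the kernel in the Estermann functional equation: the boundary values $G_\pm$ at $z=n/n_1\pm i\varepsilon$ realize the jump of ${}_2F_1(d+1,d+r_1+1;k;z)$ across the cut $[1,\infty)$, which is met precisely when $0<n_1<n$, while the cases $n_1>0$ and $n_1<0$ in \eqref{eq:Q_convolution_form_main_theorem} select the continuation on the correct side of the cut. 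I expect the principal technical obstacle to be verifying this matching — that the engineered $\mathcal{Q}$ coincides with the Estermann kernel — via a Mellin--Barnes representation or a contiguity/quadratic transformation linking the Bessel-type kernel of the functional equation to the ${}_2F_1$ in $G_\pm$.

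Next I would isolate the two main terms. The meromorphic continuation rests on the factor $\zeta(s)\zeta(s+r_i)$ underlying $\sum_{m\ge1}\sigma_{-r_i}(m)m^{-s}$, whose poles at $s=1$ and $s=1-r_i$ have residues built from $\zeta(1+r_i)$ and $\zeta(1-r_i)$; collecting these residues together with the accompanying Gamma factors assembles the constants defining $Z^{(\alpha,\beta)}_d$ in \eqref{def:Zabd}, and symmetrizing over the two divisor functions yields precisely $-i^k Z^{(r_1,r_2)}_d\sigma_{-r_1}(n)$ and $-Z^{(r_2,r_1)}_d\sigma_{-r_2}(n)$.

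The remaining contribution is an arithmetic series of Kloosterman sums $S(m,n;c)/c$ weighted by a Bessel factor $J_{k-1}(4\pi\sqrt{mn}/c)$ — exactly the geometric side of the Petersson trace formula for the Hecke basis $\mathcal{F}_k$. Applying that formula replaces this series by $\tfrac{\Gamma(k-1)}{(4\pi\sqrt{mn})^{k-1}}\sum_{f\in\mathcal{F}_k}\frac{a_f(m)a_f(n)}{\langle f,f\rangle}$ up to the diagonal $\delta_{m=n}$, which is absorbed into the main-term computation. It then remains to carry out the outer Dirichlet summation over $m$ against $a_f(m)$: the Rankin--Ramanujan Euler-product identity $\sum_{m\ge1}a_f(m)\sigma_{-r_1}(m)m^{-s}=L(s,f)L(s+r_1,f)/\zeta(2s+r_1-k+1)$, evaluated at $s=d+r_2+1$ (where the constraint $r_1+r_2+2d+2=k$ makes the denominator $\zeta(r_2+1)$), produces the product $L(d+r_2+1,f)L(d+r_2+r_1+1,f)$ together with the normalization $\Gamma(d+1)\Gamma(d+r_1+1)$ and the power $n^{-(d+r_1+r_2)}$, completing the cuspidal sum.

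Finally I would control convergence and match constants. Absolute convergence of the series and the legitimacy of the interchanges, contour shifts, and continuation in $(r_1,r_2,d)$ follow from $\Re(r_1),\Re(r_2)\ge0$, $\Re(d)>0$, and $r_1+r_2+2d+2=k$, which simultaneously fix the modular weight and place all $L$-arguments in the region of convergence of both the Dirichlet series and the spectral sum. The phase constants $i^k$, $i^{k+1}$, $\sin(\pi r_2/2)$, and $\cos(\pi r_2/2)$ appearing in $\mathcal{Q}$ should emerge from the Gamma-factor phases in the Estermann functional equation together with the $i^{-k}$ normalization in the Petersson formula; tracking them exactly is routine but delicate.
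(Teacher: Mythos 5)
Your outline follows the same route as the paper: one divisor function is unfolded via Ramanujan's identity into Estermann zeta functions, the Estermann functional equation produces the Kloosterman--Bessel sums, the Petersson trace formula converts these into the cuspidal contribution, the Rankin--Selberg identity \eqref{eq:Hecke_sum_product_of_Lvalues} yields the product of $L$-values, and the residues of the Estermann zeta function at its two poles assemble the boundary terms $Z^{(\alpha,\beta)}_d$. This is essentially the ``constructive'' version of the argument sketched in Section~\ref{sec:sketches}, run in the opposite direction from the paper's primary proof (which starts from the Kloosterman--Bessel sum $\mathfrak{S}$ and evaluates it in two ways); the two are equivalent in content.

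The one concrete gap is in your final paragraph, where you assert that absolute convergence and the legitimacy of all interchanges follow from $\Re(r_1),\Re(r_2)\ge 0$, $\Re(d)>0$ and $r_1+r_2+2d+2=k$. They do not. The Rankin--Selberg series $\sum_m a_f(m)\sigma_{-r_1}(m)m^{-s}$ converges absolutely only for $\Re(s)>\frac{5}{4}+\frac{k-1}{2}$, which at $s=d+r_2+1$ amounts to $\Re(r_2-r_1)>\frac{3}{2}$; likewise the double sum over $m$ and $\ell$ of Kloosterman sums against $J_{k-1}$ converges absolutely only for $\Re(r_2-r_1)>2$. These conditions fail for many parameters admitted by \eqref{eq:cond_1} --- for instance $r_1=r_2=\frac{1}{3}$ in the paper's own example \eqref{eq:k=12,r1=1/3}. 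The paper therefore first proves the identity under the auxiliary hypothesis \eqref{cond_2a}, namely $\Re(r_2)>\Re(r_1)+2>2$, and then removes it by a separate meromorphic-continuation step in $(r_1,r_2,d)$, which itself requires showing (via Morera's theorem and uniform bounds on the hypergeometric weight on compact parameter sets) that the left-hand side of \eqref{eq:first_main_thm_statement_sdfsdffff} is analytic on the full domain \eqref{eq:cond_1}. You mention continuation in passing, but as written your argument would evaluate the Dirichlet series at $s=d+r_2+1$ outside its region of absolute convergence; the two-stage structure is not optional.
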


    In order for the second $L$-value in \eqref{eq:first_main_thm_statement_sdfsdffff} to be critical in the sense of Deligne, one must require $d+r_1+r_2+1 \in \mathbb{Z}$, i.e.,  $k-(d+r_1+r_2+1) = d + 1 \in \mathbb{Z}$.  Additionally, for the first $L$-value to be critical, we require $d+r_1 \in \mathbb{Z}$. Interestingly, in both of these cases, $\mathcal{Q}$ becomes an elementary function (see Section~\ref{sec:Examples}). For both of the $L$-values to be critical, we require $d, r_1, r_2 \in \mathbb{Z}$. Such cases correspond to identities in \cite{FKLR} and a generalized version of  \cite{diamantis2010kernels, o2023identities}.

 The proof of the aforementioned main result in \cite{FKLR}, which only applied to the case where $r_1$ and $r_2$ are even integers, used the holomorphic projection lemma. In this paper, we consider a more general~$\mathcal{Q}$ so that similar identities can be achieved for $r_1,r_2\in\mathbb{C}$. The proof of Theorem~\ref{thm:THEOREM-A} in Section \ref{sec:proofs} uses spectral methods in the spirit of Kuznetsov and Motohashi \cites{kuznetsov1985convolution, motohashi1994binary}.
We consider sums of the form \begin{equation}\label{eq:integral_after_contour_changedfsdd4}
	\begin{split}
		\mathcalSsumdef:=  \sum_{m=1}^\infty \kappa(m) m^{-R}  \sum_{\ell=1}^\infty \frac{2 \pi }{\ell} S(-m,-n; \ell)  J_{k-1} ( \tfrac{4 \pi \sqrt{mn}}{\ell}),
	\end{split}
\end{equation} where $\kappa(m) =\sigma_{-r_1}(m)$, $S$ denotes the Kloosterman sum, and $J_{k-1}$ is the $J$-Bessel function of order $k-1$, in two different ways. On one hand, we use the Petersson trace formula to derive the contribution of the Hecke cuspforms. On the other hand, we use a Mellin inversion identity for the Bessel functions of the first kind. In this direction, once we shift the contour to be in the region of convergence for the Estermann zeta function, we are able to obtain the convolution sum and the remaining ``boundary terms" defined by $Z^{(\alpha, \beta)}_d$. 

It is worth noting that there is a more direct and constructive version of this proof outlined  in Section~\ref{sec:sketches}. While the proof in Section \ref{sec:sketches} is more elegant and intuitive, the one included in Sections~\ref{sec:petersson}--\ref{sec:conclusion} illustrates more clearly that the function $\mathcal{Q}$ defined in \eqref{eq:Q_convolution_form_main_theorem} is in fact the most general weight function of such convolution sums which  yields closed expressions involving Hecke eigenforms. A more general form of~$\mathcal{Q}$ will yield other terms involving Maa\ss\,  cuspforms and integrals of non-holomorphic Eisenstein series from the Kuznetsov trace formula.

The method illustrated in this paper is not tied to a product of two divisor functions. One could choose $\kappa(m)$ in \eqref{eq:integral_after_contour_changedfsdd4} to be the $m$-Fourier coefficients of a Hecke cusp form instead of a divisor function. Formally following the proof of Theorem~\ref{thm:THEOREM-A}, one obtains  convolution sums of the form

\begin{equation}
    \sum_{\stackrel{n_1+n_2=n}{n_1, n_2 \in \mathbb{Z} \setminus \{ 0, n\}  }}  		   \kappa(n_1) \sigma_{-r_2}(n_2)   \mathcal{W}(n_1,n_2),
\end{equation}
where $\mathcal{W}$ is a certain combination of hypergeometric functions. In this case, the resulting $L$-values would be associated to Rankin-Selberg $L$-functions.

In the remainder of this section we provide a number of examples of Theorem~\ref{thm:THEOREM-A} where $\mathcal{Q}$ can be simplified. Section \ref{sec:mgf} extends Theorem~\ref{thm:THEOREM-A} to special cases with $\text{Re}(d)<1$ resulting in Theorem~\ref{thmB}. This result uses regularization of a divergent series, and the proof of this extension is given in Section~\ref{sec:regularization_of_divergent_sums}. Theorem~\ref{thmB} gives an account for mysterious appearing of $L$-values  in the theory of modular graph functions. We provide a brief set-up of some of the previously known 
objects and results appearing in the proof of Theorem~\ref{thm:THEOREM-A} in Section \ref{sec:prelim}. Section 4 has three major components. Preliminary lemmas are proven in Section \ref{sec:preliminaryresults}. Then, for $\operatorname{Re}(r_2) > \operatorname{Re}(r_1) + 2 > 2$, the sum $\mathcalSsumdef$ as in \eqref{eq:integral_after_contour_changedfsdd4} is computed two ways: using the Petersson trace formula in Section \ref{sec:petersson} and using Mellin inversion in Section \ref{sec:InvMellin}. The proof of Theorem~\ref{thm:THEOREM-A} is completed via meromorphic continuation in Section \ref{sec:mero}.

\subsection{Examples}\label{sec:Examples}

For certain choices of $r_1, r_2, d$  the function $\mathcal Q(n_1, n_2)$ from  Theorem~\ref{thm:THEOREM-A} simplifies. These simplifications are explained in the Appendix.
Additionally, in Sections \ref{sec:d_positive_integer_examples} and \ref{sec:d+r1integer_section}, at least one of respective $L$-values appearing in the right hand side of \eqref{eq:first_main_thm_statement_sdfsdffff} is critical in the sense of Deligne.

When $d$ is an integer and  $r_1, r_2$ are even integers, the first term in \eqref{eq:Q_convolution_form_main_theorem} vanishes, and we recover Theorem~1 in \cite{FKLR} (except for the case $r_1 = r_2 = 0$ and $d=1$, that we do not consider here to avoid adding even more technicalities to the proof).

 When $d$ is an integer and $r_1, r_2$ are odd integers, the second term in \eqref{eq:Q_convolution_form_main_theorem} vanishes. Moreover, note that for the fixed value of $n$, the first term is non-zero only for finitely many $n_1$. The result subsumes  \cite[Section 4.2]{diamantis2010kernels} and \cite{o2023identities} which introduced an algorithmic approach (though not a closed-form expression) for computing such identities.

We proceed with considering specializations of the main result to certain parameters.

\subsubsection{$d$ is a positive integer}\label{sec:d_positive_integer_examples}
 When $d$ is a positive integer, $\mathcal{Q}(n_1,n_2)$ is an elementary function.
As noted above, our result subsumes those found in \cite[Section 4.2]{diamantis2010kernels} and \cite{o2023identities}:  as an example, for $r_1 = 3$, $r_2 = 5$, $d=2$, note that $k=14$ and the convolution identity becomes 
\begin{equation}
    \sum_{n_1 = 1}^{n-1}\left(  10 -55 \tfrac{n_1}{n} +66 \tfrac{n_1^2}{n^2} \right)\sigma _{3}(n_1) \sigma_{5}(n-n_1) =\frac{\sigma _3(n)-\sigma _5(n)}{24},
\end{equation}
 and for $r_1 = 1, r_2 = 3, d=1$ (thus, $k=8$) we have 
\begin{equation}\label{eq:identity_osullivanooooeo}
    \sum _{n_1=1}^{n-1} (1- 3 \tfrac{n_1}{n}) \sigma_{1}(n_1) \sigma _{3}(n-n_1) = 
    \frac{\sigma _1(n)- (6n-5)\sigma _3(n)}
    {120}.
\end{equation}
For $d=2$ and $r_1=r_2=3$ (thus, $k=12$) we have
\begin{align}
   \sum _{n_1=1}^{n-1}
\frac{\pi   (3n_1-2 n) (3 n_1-n) }{1440\,\, n^8}\sigma_{3}(n_1) \sigma _{3}(n-n_1) = -  \frac{\zeta(4) \sigma_{3}(n) }{4 \pi^{3}\Gamma(3) \Gamma(6)n^6} 
+ \frac{\Gamma(11)L( 6,\Delta) L( 9, \Delta )}{ (4 \pi )^{11} (2 \pi)^{3}\Gamma(3) \Gamma(6) \langle  \Delta,   \Delta \rangle }  \frac{\tau(n)}{n^{8}},
\end{align}
where $\Delta(z) = \sum_{n\geq 1} \tau(n) e^{2\pi i n z}$ is the weight 12 cuspform.

Note that examples for $r_1, r_2$ odd and $d \in \mathbb{N}$ are a  generalization of Niebur's formula for the Ramanujan tau function, cf. \cite[Section 4.2]{diamantis2010kernels}. For example,
when $d=2$ and $r_1=r_2=5$ (thus, $k=16$), we have 
\begin{align}
   \sum _{n_1=1}^{n-1}
\frac{\pi (-3 n^2+13 n n_1-13 n_1^2)}{1814400\, n^{12}}\sigma_{5}(n_1) \sigma _{5}(n-n_1) 
=    & - \,\frac{2\zeta(6)}{(2 \pi)^{5}\Gamma(3) \Gamma(8)} \sigma_{5}(n) n^{-10}\nonumber \\ & 
\ \ \ +   \Gamma(15) \frac{L( 8,f ) L( 13, f ) }{(2 \pi)^{5}(4 \pi )^{15} \Gamma(3) \Gamma(8)  \langle f,  f \rangle } \frac{a_f(n)}{n^{12}},
\end{align}
where $f(z)=\sum_{n\geq 1}{a_f(n)}e^{2\pi i n z}$ the weight 16 cuspform.

For $r_1=\frac{1}{2}$, $r_2=\frac{3}{2}$, $d=1$ (thus,   $k=6$) we have 
    \begin{align}
\sum_{ \stackrel{n_1 \in \mathbb{Z} \setminus \{ 0,n \}}{n_1+n_2=n}  }   \mathcal{Q}_1(n_1,n_2) \sigma_{-\frac{1}{2}}(n_1) \sigma_{-\frac{3}{2}}(n_2)  
& = \left( \frac{ \zeta(-\frac{1}{2}) }{ (2 \pi )^{-\frac{3}{2}}\Gamma (4) \Gamma (\frac{7}{2})  } +   \frac{\zeta(\frac{5}{2})}{(2 \pi)^{\frac{3}{2}}\Gamma(2) \Gamma(\frac{5}{2})} n^{-\frac{3}{2} }\right)\sigma_{-1/2}(n) \nonumber\\ 
& \  \  - 
\left( \frac{ \zeta(\frac{1}{2}) }{ (2 \pi )^{-\frac{1}{2}}\Gamma (4) \Gamma (\frac{5}{2})  } +   \frac{\zeta(\frac{3}{2})}{(2 \pi)^{\frac{1}{2}}\Gamma(2) \Gamma(\frac{7}{2})} n^{-\frac{1}{2}}\right)\sigma_{-3/2}(n),
\end{align}
 where $\mathcal{Q}_1(n_1,n_2)$ is defined by 
\begin{equation}
    \begin{split}
 \frac{(n_1+n_2)^3}{2 \sqrt{2}} \mathcal{Q}_1(n_1,n_2)=&\,  \frac{ n_1^2 n_2}{3 }+\frac{ n_1^3}{45 }+\frac{ n_2^3}{9 }-  n_1 n_2^2 + 8|n_1|^{1/2} |n_2|^{3/2} \left(\frac{n_1}{9}- \frac{n_2}{15} \right).
    \end{split}
\end{equation}

\subsubsection{$d + r_1$ is an integer}\label{sec:d+r1integer_section}
     
When $d + r_1$ is an integer (and hence, $d + r_2$ is an integer so that  $k = 2 d + r_1 + r_2 + 2 \in \mathbb{Z}$), the function $\mathcal{Q}(n_1,n_2)$ is an elementary function. 
For $d=1-i$ and $r_1=-r_2 = 1 + i$,  (and hence $k=6$), we have 
\begin{equation}
    \begin{split}
      \mathcal{Q}_2(n_1, n_2 )=  \frac{\sinh (\pi / 2)}{10 (n_1+n_2)^{3+i}}   \left(| n_1| ^{1+i} p_3(n_1,n_2)-| n_2| ^{1+i} p_3(n_2,n_1)\right),
    \end{split}
\end{equation}
for 
$
    p_3(\alpha, \beta ) = \alpha^2 + (2-6 i) \alpha \beta - 5 \beta^2
$
and corresponding sum identically vanishes:
\begin{equation}
\sum_{ \stackrel{n_1 \in \mathbb{Z} \setminus \{ 0,n \}}{n_1+n_2=n}  } \mathcal{Q}_2(n_1, n_2) \sigma_{-1-i}(n_1) \sigma_{-i-1} (n_2) = 0.
\end{equation}
For $r_1 = r_2 = i$, $d=3-i$ (thus, $k=8$) we have 
\begin{equation}
\sum_{ \stackrel{n_1 \in \mathbb{Z} \setminus \{ 0,n \}}{n_1+n_2=n}  } \mathcal{Q}_3(n_1, n_2) \sigma_{-i}(n_1) \sigma_{-i} (n_2) = -\frac{\sigma _{-i}(n)}{3}  \left(    \frac{\zeta(1-i)}{ (2 \pi)^{-i} \Gamma(i+4)  }   + \frac{\zeta(1+i)}{  (2 \pi)^i \Gamma(4-i)  } n^{-i}  \right)
\end{equation}
where 
\begin{equation}
    \begin{split}
        \mathcal{Q}_3(n_1, n_2) = -\frac{ \cosh (\pi  / 2)}{30 n^{3+i}}  \left(| n_1| ^i  p_4(n_1, n_2) +| n_2| ^i p_4(n_2, n_1 ) \right)
    \end{split}
\end{equation}
for 
$
p_4(\alpha, \beta) =     \alpha^3-(3+6 i) \alpha^2 \beta-(3-6 i) \alpha \beta^2+\beta^3.$
When $d=\frac{14}{3}$ and $r_1=r_2=\frac{1}{3}$ so that $k=12$, we arrive at \eqref{eq:k=12,r1=1/3}.

\subsubsection{$d$ is a positive half-integer and $r_1$ and $r_2$ are integers of opposite parity}

 When $d$ is a positive half-integer and $r_1$ and $r_2$ are integers of opposite parity,  $\mathcal{Q}(n_1,n_2)$ is a combination of elementary functions and elliptic integrals. Aa an example, the convolution identity for $r_1=1, r_2 = 2$, and $d=1/2$ (hence, $k=6$) can be written as follows: \begin{equation}
    \begin{split}
\mathcal{Q}_4(n_1,n_2)  &  =\delta_{n_1 > 0}  \frac{128   }{1575 \pi }  \frac{\sqrt{n_1}}{(n_1+n_2)^{7/2}} \\
        & \times \left(n_2 \left(n_1^2+74 n_1 n_2-55 n_2^2\right) q_K(n_1, n_2)  +\left(2 n_1^3+17 n_1^2 n_2-108 n_1 n_2^2+5 n_2^3\right) q_E(n_1, n_2) \right) ,
    \end{split}
\end{equation}
for $\delta_{n_1>0}$ is $1$ when $n_1>0$ and $0$ otherwise and 
\begin{equation}
\begin{split}
    q_K(n_1, n_2) &= \frac{1}{2} \lim_{\varepsilon \to 0+} ( K(1+\tfrac{n_2}{n_1}+i \varepsilon) +K(1+\tfrac{n_2}{n_1}-i \varepsilon)), \\ 
    q_E(n_1, n_2) &= \frac{1}{2} \lim_{\varepsilon \to 0+} ( E(1+\tfrac{n_2}{n_1}+i \varepsilon) +E(1+\tfrac{n_2}{n_1}-i \varepsilon)), 
\end{split}
    \end{equation}
 where $K$ and $E$ denote the complete elliptic integral of the first and second kind respectively.
The convolution identity reads 
\begin{equation}
    \begin{split}
    \sum_{ \stackrel{n_1 \in \mathbb{Z}_+ \setminus \{ 0,n \}}{n_1+n_2=n}  } & \mathcal{Q}_4(n_1, n_2) \sigma_{-1}(n_1) \sigma_{-2} (n_2) =  \sigma _{-1}(n) \left(\frac{2 \zeta (3)}{3 \pi ^3 n^2}-\frac{128 \pi }{4725}\right)+\frac{4 (16 n-7) \sigma _{-2}(n)}{315 n}.
    \end{split}
\end{equation}
\subsection{Divergent series and modular graph functions}\label{sec:mgf}

In Section \ref{sec:regularization_of_divergent_sums}, we regularize divergent sums of the form 
\begin{equation}\label{eq:Qagain}
 \sum_{\stackrel{n_1+n_2 = n}{n_1 \in \mathbb{Z} \setminus \{ 0, n \} }}  \mathcal{Q}(n_1,n_2)  \sigma_{-r_1}(n_1) \sigma_{-r_2}(n_2).\end{equation}
Our interest in these sums is motivated by \textit{modular graph functions}.  A notable example arises from studying solutions \(f\) of the differential equation  
\begin{equation}\label{eq:DE}
    (\Delta - R(R+1)) f(z) \;=\; E_a(z)\,E_b(z) 
    \qquad \text{on } SL_2(\mathbb{Z}) \backslash SL_2(\mathbb{R}) / SO_2(\mathbb{R}),
\end{equation}
where \(a,b,R \in \mathbb{Z}\), \(\Delta = y^2(\partial_x^2+\partial_y^2)\) is the Laplace-Beltrami operator, 
and \(E_a(z), E_b(z)\) are non-holomorphic Eisenstein series. Such equations are central objects in the theory of modular graph functions \cites{DKS2021_1, DKS2021_2}.
 It is expected that, based on a similar observation when $a$ and $b$ are half-integers \cites{CGPWW2021, FKLR}, homogeneous parts of these solutions correspond to divisor sums of the form \eqref{eq:Qagain} with  
\begin{equation}\label{eq:modular_graph_numbers1}
    k = 2 R + 2, \quad r_1 = 2 a - 1, \quad r_2 = 2 b - 1.
\end{equation}
In order to formulate the next theorem, we 
introduce \textit{Jacobi polynomials}, that are defined for $d \in \mathbb{N}_0$ and $\alpha, \beta > -1$ \cite[Table 18.3.1]{NIST} by 
\begin{equation}
P^{(\alpha, \beta)}_d(z):= \frac{\Gamma(\alpha +d+1)}{d!\,\Gamma(\alpha+\beta+d+1)}\sum_{m=0}^d {d\choose m}\frac{\Gamma(\alpha+\beta+d+m+1)}{\Gamma(\alpha+m+1)}\left(\frac{z-1}{2}\right)^m.
\end{equation}

The proof of the following theorem can be found in Section \ref{sec:regularization_of_divergent_sums}.

\begin{letterthm}\label{thmB} Let $r_1, r_2$ be odd integers with $r_1 \ge r_2\geq 3$ and $d \in \mathbb{Z}$ such that 
$
    -r_2 + 1 \le d \le -2
$ and 
 \begin{equation}
d\neq -r_1-1 \text{ or } -r_1-r_2-1.     
 \end{equation}
Let $n \in \mathbb{N}$ and 
let $n_1, n_2 \in \mathbb{Z}$ with $n_1+n_2 = n$ and $n_1, n_2 \neq 0$. Let $\mathcal{P}(n_1,n_2)$ be defined by  
\begin{equation}\label{eq:P}
    \begin{split}
  \Gamma(k-d-1) \cdot \mathcal{P}(n_1,n_2)  =& \   2  \pi i^{k+r_2+1}   \left( \frac{n_2}{n}  \right)^{r_2}  P_{d+r_1}^{(r_2,-r_1)}\left(2\tfrac{ n_1}{n}-1\right) \cdot \begin{cases}
    1  , & 0 < n_j < n \text{ for } j=1,2, \\
    0, & \text{otherwise}.
\end{cases}
\\
&  +      \pi i^{ r_1+1 }   \left(\frac{n}{n_1}\right)^{ d+1}   P^{  (k-1, -r_2) }_{-d-1} (1-2 \tfrac{n}{n_1}) \cdot \begin{cases}
    1  , & n_1 > 0, \\
    -1, & n_1 < 0.
\end{cases}
    \end{split}
\end{equation}
 Then   the divergent sum  \begin{equation}
  \sum_{ \stackrel{n_1 \in \mathbb{Z} \setminus \{ 0,n \}}{n_1+n_2=n}  } \sigma_{-r_1}(n_1) \sigma_{-r_2 }(n_2)  \mathcal{P}(n_1, n_2), \end{equation}
admits a regularization given by 
\begin{equation}
\begin{split}
     & -  i^{ k} \cdot  \mathcal{Z}^{r_1,r_2}_d\sigma _{-r_1}(n)    - \mathcal{Z}^{r_2,r_1}_d\sigma_{-r_2}(n) \\
     &  \ \ \ \ \ \  -\frac{i^k 2^{2 - 2 k - r_2} \pi ^{1-k-r_2} \Gamma (k-1) }{\Gamma \left(1+d+r_1\right)} \sum_{f \in \mathcal{F}_k} \frac{L\left(1 + d + r_2,f\right) L\left(1 + d + r_1 + r_2,f\right)}{\langle  f,  f \rangle }   \frac{a_f(n)}{ n^{d+r_1+r_2}}, \\
\end{split}
\end{equation}
 where the summation is over  $\mathcal{F}_k$ a space of normalized Hecke  cusp forms of weight $k$, $L(\cdot, f)$ denotes the $L$-function associated to $f$, $\langle \cdot, \cdot \rangle$ is the Petersson product, and 
\begin{equation}
    \begin{split}
\mathcal{Z}^{r_1,r_2}_d =    \frac{(-1)^{d+1} (2 \pi )^{r_2} }{\Gamma (1+d+r_2) \Gamma (1+d+r_1+r_2 )  \Gamma(-d) }  \zeta'(1-r_2) + \frac{(2 \pi )^{-r_2} \zeta (1+r_2 )}{\Gamma (1+d+r_1 )} n^{-r_2}.
    \end{split}
\end{equation}
\end{letterthm}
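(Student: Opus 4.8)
The plan is to obtain Theorem~\ref{thmB} as a degenerate limit of Theorem~\ref{thm:THEOREM-A}, realizing the \emph{hypergeometric regularization} as an analytic continuation that deforms $(r_1,r_2,d)$ jointly while keeping the weight $k$ fixed. Concretely, I would introduce a complex parameter $t$ and replace $(r_1,r_2,d)$ by $(r_1-t,\,r_2-t,\,d+t)$; this preserves both the constraint $r_1+r_2+2d+2=k$ and the difference $r_1-r_2$, and keeps $k$ equal to its fixed even integer value, so the cusp-form space $\mathcal{F}_k$ does not change. The hypothesis $-r_2+1\le d\le -2$ is exactly what makes the interval $-d<t\le r_2$ nonempty: for real $t$ there one has $\Re(d+t)>0$ and $\Re(r_1-t),\Re(r_2-t)\ge 0$, so Theorem~\ref{thm:THEOREM-A} applies verbatim, the convolution sum converges absolutely, and it equals the right-hand side of \eqref{eq:first_main_thm_statement_sdfsdffff} at the shifted parameters. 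I would then define the regularization of the divergent sum at $t=0$ by
\begin{equation*}
\lim_{t\to 0}\ \Gamma(d+t+1)\sum_{\stackrel{n_1+n_2=n}{n_1\in\mathbb{Z}\setminus\{0,n\}}}\mathcal{Q}^{(r_1-t,\,r_2-t)}_{d+t}(n_1,n_2)\,\sigma_{-(r_1-t)}(n_1)\,\sigma_{-(r_2-t)}(n_2),
\end{equation*}
where the inner expression is continued from the convergent regime through the meromorphic function on the right of \eqref{eq:first_main_thm_statement_sdfsdffff}. The factor $\Gamma(d+t+1)$ has a simple pole at $t=0$ (since $d+1\le -1$) which extracts the finite part of this continuation.

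Before evaluating the limit I would check that this procedure regularizes precisely the sum with weight $\mathcal{P}$, i.e.\ that $\mathcal{P}(n_1,n_2)=\lim_{t\to0}\Gamma(d+t+1)\,\mathcal{Q}^{(r_1-t,r_2-t)}_{d+t}(n_1,n_2)$ pointwise. This is a hypergeometric computation based on \eqref{eq:Q_convolution_form_main_theorem}. As $t\to0$ the arguments tend to odd integers, so $\cos\big(\tfrac{\pi}{2}(r_1-t)\big)$ and $\cos\big(\tfrac{\pi}{2}(r_2-t)\big)$ vanish linearly in $t$ while $\sin\big(\tfrac{\pi}{2}(r_2-t)\big)\to\pm1$, and $d+t+1$ becomes a non-positive integer, so ${}_2F_1(d+t+1,\dots;k;z)$ terminates and the branch-cut discontinuity $G_+-G_-$ also vanishes linearly. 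On the range $n_1<0$ or $n_1>n$, where the argument lies off $[1,\infty)$ and $G_+=G_-$, the surviving $\cos$-factor paired with the simple pole of $\Gamma(d+t+1)$ and the Jacobi identity ${}_2F_1(d+1,d+r_1+1;k;\tfrac{n}{n_1})=\tfrac{\Gamma(-d)\Gamma(k)}{\Gamma(k-d-1)}P^{(k-1,-r_2)}_{-d-1}(1-2\tfrac{n}{n_1})$ yields the infinite-support line of \eqref{eq:P}; on $0<n_1<n$ the rescaled discontinuity $\Gamma(d+t+1)(G_+-G_-)$ produces, after an Euler transformation of the resulting ${}_2F_1$, the finite-support Jacobi polynomial $P^{(r_2,-r_1)}_{d+r_1}(2\tfrac{n_1}{n}-1)$. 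The residue $\mathrm{Res}_{t=0}\Gamma(d+t+1)=(-1)^{d+1}/\Gamma(-d)$ accounts for the normalization $\Gamma(k-d-1)$ and the phases $i^{k+r_2+1},i^{r_1+1}$ in \eqref{eq:P}.

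With the weight identified it remains to compute the limit of $\Gamma(d+t+1)$ times the right-hand side of \eqref{eq:first_main_thm_statement_sdfsdffff}. In the cusp-form sum the explicit factor $1/\Gamma(d+t+1)$ cancels against $\Gamma(d+t+1)$; collecting the powers of $2\pi$ and letting $t\to0$ gives exactly the $L$-function sum of Theorem~\ref{thmB}. Here $L(1+d+r_2,f)$ and $L(1+d+r_1+r_2,f)$ are entire, so evaluating at the negative target $d$ --- where $1+d+r_1+r_2>k-1$ is non-critical --- is unproblematic, and the sum is over genuine weight-$k$ forms because $k$ was held fixed. For the boundary terms I would show $\lim_{t\to0}\Gamma(d+t+1)\,Z^{(r_1-t,r_2-t)}_{d+t}=\mathcal{Z}^{r_1,r_2}_d$ using \eqref{def:Zabd}: the second summand of $Z$ carries $1/\Gamma(d+t+1)$ and cancels directly to the second summand of $\mathcal{Z}$, while the first summand carries $\zeta(1-r_2+t)$, which vanishes at the trivial zero $1-r_2$. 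The resulting $0\cdot\infty$ form $\Gamma(d+t+1)\zeta(1-r_2+t)$ is resolved by L'Hôpital, producing $\zeta'(1-r_2)$ together with $(-1)^{d+1}/\Gamma(-d)$ from the residue --- precisely the first summand of $\mathcal{Z}^{r_1,r_2}_d$; interchanging $r_1,r_2$ gives $\mathcal{Z}^{r_2,r_1}_d$. Assembling the three pieces yields the claimed value.

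The main obstacle is to make this regularization rigorous rather than formal. One must verify that the meromorphic continuation in $t$ of the convolution-sum side truly coincides with the continuation of the right side of \eqref{eq:first_main_thm_statement_sdfsdffff} \emph{across} the line $\Re(d+t)=0$ where the series ceases to converge, and --- to justify that this genuinely regularizes the sum built from $\mathcal{P}$ --- that the prescription agrees with an intrinsic Estermann-zeta regularization of the divergent infinite-support piece $\sum_{n_1}(\tfrac{n}{n_1})^{d+1}P^{(k-1,-r_2)}_{-d-1}(1-2\tfrac{n}{n_1})\sigma_{-r_1}(n_1)\sigma_{-r_2}(n_2)$, which after expanding the Jacobi polynomial reduces to finitely many shifted divisor sums whose continuations must be matched term by term. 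Controlling the branch-cut discontinuity uniformly in $n_1$ as $t\to0$, and confirming that no poles of $\Gamma(d+t+1)$ or of $\zeta$ are crossed on the path (for which one keeps $0<t<r_2$ away from the integer poles $t=-d-1,\dots$), are the delicate points. The excluded values $d=-r_1-1$ and $d=-r_1-r_2-1$, together with the window $-r_2+1\le d\le -2$, are exactly the conditions that keep the zero of the right-hand side and the pole of $\Gamma(d+t+1)$ simple, so that the extracted limit is finite and unambiguous.
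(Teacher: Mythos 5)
Your proposal is correct and follows essentially the same route as the paper's proof in Section~\ref{sec:regularization_of_divergent_sums}: the paper deforms $(r_1,r_2,d)\mapsto(r_1+\varepsilon,\,r_2+\varepsilon,\,d-\varepsilon)$ (your $t=-\varepsilon$) keeping $k$ fixed, multiplies by $\Gamma(\hat d(\varepsilon)+1)$ to extract the finite part, and computes the same three limits --- of $\mathcal{Q}$ via the terminating hypergeometric/Jacobi-polynomial identities and the vanishing of the branch-cut discontinuity and of the cosine factors, of $Z^{(\alpha,\beta)}_d$ via the trivial zero of $\zeta(1-r_2+t)$ against the simple pole of $\Gamma(d+1+t)$, and of the cusp-form sum via cancellation of the explicit $1/\Gamma(d+1)$. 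The only differences are cosmetic (the sign of the deformation parameter and your extra remarks on matching against an intrinsic Estermann-zeta regularization, which the paper does not carry out).
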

\vspace{.2cm}

For a fixed value of $n$,  the first term in~$\mathcal{P}(n_1,n_2)$ is equal to zero except for finitely many~$n_1$. We can subtract their sums  from the convolution identity and obtain for the case $a=b=4, d=-2$, that the theorem above regularizes the sum 
\begin{equation}\label{eq:shameless_regularization}
    \sum_{\stackrel{n_1+n_2=n}{n_1, n_2 \in \mathbb{Z} \setminus \{ 0, n\}  } } \text{sgn}(n_1)\left(\frac{n_2-n_1}{n_1+n_2}\right)\sigma_{-7}(n_1) \sigma_{-7}(n_2) ,
\end{equation}
and this regularized sum is equal to 
\begin{equation}\label{eq:shameless_regularization1}
\begin{split}
 2& \sum_{ n_1 =1} ^n \frac{ (n-n_1)^7 }{n^{12}}(14 n^3 n_1^2+28 n^2 n_1^3+5 n^4 n_1+n^5+42 n n_1^4+42 n_1^5)\,\sigma_{-7}(n_1) \sigma_{-7 }(n_2) \\ 
     & \ \   +  \left(\frac{33 }{30 n^7}-\frac{32\pi^6 \zeta '(-6)}{90}\right)\sigma _{-7}(n)    +\frac{ \Gamma (11) \Gamma(13)}{3\cdot 2^{30}\pi ^{19}\Gamma \left(6\right)} \frac{L\left(6,\Delta\right) L\left(13,\Delta\right)}{\langle \Delta, \Delta \rangle }   \frac{\tau(n)}{n^{12}}. 
\end{split}
\end{equation}
This regularized sum corresponds to  the values found in (3.11) of \cite{DKS2021_2} and (4.22) of \cite{DGW}. 

For $a=7$, $b=6$ and $d=-4$, 
\begin{equation}\label{eq:shameless_regularization2}
    \sum_{\stackrel{n_1+n_2=n}{n_1, n_2 \in \mathbb{Z} \setminus \{ 0, n\}  } } \frac{ \left(6 n_1^3-18 n_1^2 n_2+22 n_1 n_2^2-11 n_2^3\right)}{ (n_1+n_2)^3}\sigma_{-13}(n_1) \sigma_{-11}(n_2) ,
\end{equation}
and this regularized sum is equal to
\begin{equation}
\begin{split}
   2    \sum_{ n_1 =1} ^n&\frac{  (n-n_1)^{11}\, p(n,n_1)}{n^{20}}\sigma_{-13}(n_1) \sigma_{-11 }(n_2)\\ &  +
   \left(\frac{223193  }{1260 n^{11}}-\frac{16}{4725} \pi^{10} \zeta '(-10)\right)\sigma_{-13}(n)
   + \left(\frac{323 }{n^{13}}-\frac{8 \pi^{12} \zeta '(-12)}{42525}\right)\sigma_{-11}(n)\\ & 
   -\frac{ \Gamma (17) \Gamma(21)}{5\cdot 2^{47}\pi ^{29}\Gamma \left(10\right)} \frac{L\left(8,f\right) L\left(21,f\right)}{\langle f, f \rangle }   \frac{a_f(n)}{n^{20}},
\end{split}
\end{equation}
where $p(n,n_1) = 11 n^9 + 66 n^8 n_1 + 216 n^7 n_1^2 + 504 n^6 n_1^3 + 924 n^5 n_1^4 + 
 1386 n^4 n_1^5 + 1716 n^3 n_1^6 + 1716 n^2 n_1^7 + 1287 n n_1^8 + 
 572 n_1^9$ and  $f(z)=\sum_{n\geq 1}{a_f(n)}e^{2\pi i n z}$ the weight 18 cuspform.

Theorem~\ref{thm:THEOREM-A} can be extended to any $\text{Re}(d)<1$ with $d\neq -1, -r_1-1, -r_2-1, -r_1-r_2-1$ using a similar regularization argument as in Section \ref{sec:regularization_of_divergent_sums}. However, in this more general setting $\mathcal{Q}$ may not simplify to a polynomial as in \eqref{eq:P}.

Additionally, it may be relevant to note that performing a similar regularization of the main result in \cite{FKLR} will not be sufficient to arrive at Theorem~\ref{thmB}.
Specifically, one can see this from the fact that both terms from  \eqref{eq:Q_convolution_form_main_theorem}  contribute to $ \mathcal{P}(n_1, n_2) $ despite that $r_1$ and $r_2$ are odd (in the contrast to the situation when the sum was absolutely convergent).

\section{Preliminaries}\label{sec:prelim}

For $m, n \in \mathbb{Z} \setminus \{ 0 \}$ and $\ell \in \mathbb{N}$, the \textit{Kloosterman sum} is \cite[(1.1.7)]{motohashi1997spectral} \begin{equation}\label{eq:Kloost_def}
        S(m,n;\ell) = \sum_{\stackrel{h=1}{(h,\ell)=1}}^\ell e(\tfrac{1}{\ell} (mh+n \bar{h})),
    \end{equation}
    where $\bar{h}$ is inverse of $h$ modulo $\ell$ and 
     \begin{equation}\label{def:exponential_short}
     e(  \cdot ) = \exp(2 \pi i (\cdot) ).          
     \end{equation}

Let $\{ \kappa(m)\}_{m \in \mathbb{N}_0}$ be a sequence of complex numbers, let $J$ denote the Bessel function of the first type and let $S$ denote the Kloosterman sum as in \eqref{eq:Kloost_def}. We consider the sum \eqref{eq:integral_after_contour_changedfsdd4}, 
\begin{equation}
	\begin{split}
		\mathcalSsumdef:=  \sum_{m=1}^\infty \kappa(m) m^{-R}  \sum_{\ell=1}^\infty \frac{2 \pi }{\ell} S(-m,-n; \ell)  J_{k-1} ( \tfrac{4 \pi \sqrt{mn}}{\ell}),
	\end{split}
\end{equation}
for some $R \in \mathbb{C}$ that we will define later. 
On the one hand, $\mathcalSsumdef$ can be rewritten as a convolution sum (see Section~\ref{sec:InvMellin}). On the other hand, the innermost sum can be expressed via the Petersson trace formula \cite[Theorem 9.6]{iwaniec2021spectral}:
\begin{lemma} [Petersson trace formula]\label{lem:petersson_trace_formula}  Let $m,n  \in \mathbb{N}$ and 
$k$ is an even integer with $k\ge 4$. Then  we have the absolutely convergent series 
\begin{equation}
\begin{split}
      \sum_{\ell=1}^\infty  \frac{2 \pi }{\ell} &S(-m,-n; \ell)  J_{k-1} ( \tfrac{4 \pi \sqrt{mn}}{\ell})= (-1)^{\tfrac{k}{2}} \frac{\Gamma(k-1)}{ (4 \pi \sqrt{mn})^{k-1} } \sum_{f \in \mathcal{F}_k}  \frac{a_f(n) \overline{a_f(m)}}{\langle \bar f, \bar f \rangle }  - (-1)^{\tfrac{k}{2}} \delta_{m=n},
\end{split}
\end{equation}
where $\mathcal{F}_k$ is a space of normalized Hecke cusp forms $f(z)=\sum_{n\geq 1}a_f(n)e^{2\pi i n z}$ of weight $k$ and $\delta_{m=n}$ is the Kronecker delta function.
\end{lemma}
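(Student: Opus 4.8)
The plan is to prove this classical identity by the method of Poincaré series, computing the Fourier coefficients of a single weight-$k$ Poincaré series in two independent ways and matching them. For each $m \in \mathbb{N}$ I would introduce the holomorphic Poincaré series of weight $k$ for $SL_2(\mathbb{Z})$,
\[
P_m(z) = \sum_{\gamma \in \Gamma_\infty \backslash SL_2(\mathbb{Z})} (cz+d)^{-k}\, e(m \gamma z),
\]
where $\Gamma_\infty$ is the stabilizer of the cusp at infinity and $\gamma = \left(\begin{smallmatrix} * & * \\ c & d\end{smallmatrix}\right)$. For $k \ge 4$ this converges absolutely and defines an element of the space of weight-$k$ cusp forms, hence lies in the span of $\mathcal{F}_k$.

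First I would carry out the two key unfolding computations. Splitting the coset sum according to the value of $c = \ell$ and integrating the seed against $e(-nz)$ produces the $n$-th Fourier coefficient $p_m(n)$ of $P_m$ as a diagonal term $\delta_{m=n}$ plus a sum over moduli $\ell$ of $S(m,n;\ell)\, J_{k-1}(4\pi\sqrt{mn}/\ell)/\ell$, where the $J$-Bessel function emerges from the standard integral $\int_{-\infty}^{\infty}(x+i)^{-k} e(\cdots)\,dx$. Separately, unfolding the Petersson inner product $\langle P_m, f\rangle$ against a cusp form $f=\sum_n a_f(n) e(nz)$ collapses the sum against the seed $e(mz)$ and, via the Gamma integral $\int_0^\infty y^{k-2} e^{-4\pi m y}\,dy$, yields $\langle P_m, f\rangle = \frac{\Gamma(k-1)}{(4\pi m)^{k-1}}\,\overline{a_f(m)}$ up to the fixed normalizing constants.

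Next I would expand $P_m$ in the Hecke basis, $P_m = \sum_{f \in \mathcal{F}_k} \frac{\langle P_m, f\rangle}{\langle f, f\rangle}\, f$, read off its $n$-th Fourier coefficient as $p_m(n) = \sum_{f} \frac{\langle P_m, f\rangle}{\langle f, f\rangle}\, a_f(n)$, substitute the inner-product formula from the second computation, and equate the result to the Kloosterman--Bessel expression from the first. Solving for the $\ell$-sum, and using that for level one $S(-m,-n;\ell) = \overline{S(m,n;\ell)} = S(m,n;\ell)$ is real, gives the stated formula after the constants combine and the sign $i^{-k} = (-1)^{k/2}$ is tracked through; the $-(-1)^{k/2}\delta_{m=n}$ on the right is exactly the diagonal term transposed from the first computation.

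The hard part will be the bookkeeping rather than any single estimate: keeping the normalizing constants, the factor $2\pi$, the sign $(-1)^{k/2}$ versus $i^{-k}$, and the placement of the complex conjugations on $\overline{a_f(m)}$ and $\langle \bar f, \bar f\rangle$ all consistent across the two routes. Absolute convergence of the left-hand series, which is needed to justify extracting Fourier coefficients termwise and interchanging the summations over $\ell$ and over $f$, follows from the Weil bound on Kloosterman sums together with the small-argument bound $J_{k-1}(x) = O(x^{k-1})$; for $k \ge 4$ the resulting $\ell$-sum converges comfortably, so the only genuine care is in the constant-tracking described above.
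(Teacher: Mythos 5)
Your proposal is correct: it is the standard Poincar\'e-series proof of the Petersson trace formula (compute the $n$-th Fourier coefficient of $P_m$ by unfolding over $c=\ell$ to get the $\delta_{m=n}$ plus Kloosterman--Bessel terms, and again via $\langle P_m,f\rangle=\tfrac{\Gamma(k-1)}{(4\pi m)^{k-1}}\overline{a_f(m)}$ and the Hecke-basis expansion), and the sign and conjugation bookkeeping you describe does work out, using $S(-m,-n;\ell)=S(m,n;\ell)$ and $i^{-k}=(-1)^{k/2}$ for even $k$. The paper does not prove this lemma itself but cites it as \cite[Theorem 9.6]{iwaniec2021spectral}, whose proof is exactly the argument you outline.
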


\subsection{Divisor function}
\newcommand{\kestermann}{v}
In \cite{ramanujan1918certain}, Ramanujan expressed the divisor function  via an infinite sum. Namely, for $\Re(r_2) >  0$ and $x \in \mathbb{N}$, he wrote 
\begin{equation}\label{eq:ramanujan_identity}
		\sigma_{-r_2}(  x  )   = 
        \zeta(1+r_2)   \sum_{\ell = 1}^\infty \ell^{-r_2 - 1} c_\ell (x),
	\end{equation}
where $c_\ell $ is given by 
	\begin{equation}\label{def:ce}
		c_\ell (x) = \sumk  e( \tfrac{\kestermann x}{\ell}).
	\end{equation}    
Recall 
\begin{equation}\label{eq:sum_using_euler_totient_function}
\sum_{\ell=1}^\infty \varphi(\ell) \ell^{-s} =\zeta(s-1)/\zeta(s) ,   
\end{equation}
for the Euler's totient  function $\varphi$. For $x=0$ and $\Re(r_2)>1$, the right hand side of \eqref{eq:ramanujan_identity} is well-defined as absolutely convergent series, and, using \eqref{eq:sum_using_euler_totient_function}, 
we can rewrite it as 
\begin{equation}\label{eq:sigma0}
    \zeta(1+r_2 ) \sum_{\ell=1}^\infty \varphi(\ell) \ell^{-r_2-1} 
    = \zeta(r_2).
\end{equation}
Moreover, we will need the following estimate for $\Re(r_1) > 0$: 
\begin{equation}\label{eq:divisor_estimate}
\sigma_{-r_1}(m)=o(m^{\varepsilon})    , \quad m \to \infty ,
\end{equation}
for any $\varepsilon>0$.
\subsection{Estermann zeta function}
Let $a \in \mathbb{C}$ and let $\kestermann$ and $\ell$ be co-prime integers such that $\ell \ge 1$; note that $\kestermann$ is not necessarily positive. The \textit{Estermann zeta function} is defined for $\Re(s) > \max \{ 1, \Re(a) + 1 \}$ as
	\begin{equation}\label{def:estermann}
		E(s; \kestermann/\ell, a) := \sum_{n=1}^\infty \sigma_{a}(n) e( \tfrac{\kestermann n}{\ell}) n^{-s},
	\end{equation}
    for $e(\cdot)$ as in \eqref{def:exponential_short}.   The Estermann zeta function admits the following representation (see \cite{ishibashi1995} and references therein) via Hurwitz zeta functions:
\begin{equation}\label{eq:representation_of_estermann_via_Hurwitz_zeta}
     \begin{split}
         E\left(s; \kestermann/\ell, a\right)=\ell^{a-2 s} \sum_{m_1, m_2=1}^\ell e(\frac{ \kestermann}{\ell} m_1 m_2 ) \zeta(s-a, \frac{m_1}{\ell}) \zeta(s, \frac{m_2}{\ell})
     \end{split}
 \end{equation} 
 where for $\text{Re}(s)>1$ and $a\neq 0, -1, -2,\dots$, the Hurwitz zeta function is defined as  \begin{equation}\zeta(s,a):= \sum_{n=0}^\infty\frac{1}{(n+1)^s}.\end{equation} 
 
As a consequence of \eqref{eq:representation_of_estermann_via_Hurwitz_zeta}, for $a \neq 0$, the Estermann zeta function $E(s; \kestermann/\ell, a)$ admits a meromorphic continuation to $s \in \mathbb{C}$ with poles possible only at $s=1$ and $a + 1$ with 
\begin{equation}\label{eq:estermann_zero}
     \begin{split}
         \text{Res}_{s=1} E(s ; \kestermann / \ell, a )  &= \zeta(1-a) \ell^{a-1}, \\
         \text{Res}_{s=1+a} E(s ; \kestermann / \ell, a )  &= \zeta(1+a) \ell^{-a-1}.
     \end{split}
\end{equation}
Moreover, the Estermann zeta function satisfies the functional equation
\begin{equation}\label{eq:functional_equation_estermann_zeta}
    \begin{split}
E\left(s ; \kestermann / \ell, a \right)= & \frac{1}{\pi}\Big(\frac{\ell}{2 \pi}\Big)^{1+a-2 s} \Gamma(1-s) \Gamma(1+a-s) \\
& \times \Big(\cos(\frac{\pi a}{2}) E(1+a-s ; \bar{\kestermann} / \ell, a)-\cos (\pi s-\frac{\pi a}{2}) E(1+a-s ;-\bar{\kestermann} / \ell, a) \Big), 
    \end{split}
\end{equation}
where $\bar{\kestermann}$ is the inverse of $\kestermann$ modulo $\ell$ (see \cite[(2.4)]{kiuchi1987exponential}).

\subsection{$L$-functions}

For a Hecke cusp form $f$ of weight $k$, denote by $a_f(n)$ its $n$-th Fourier coefficient, and let 
\begin{equation}
    L(s,f) = \sum_{m=1}^\infty a_f(m) m^{-s}.
\end{equation}
For any $\delta>0$ and $m \in \mathbb{N}$ we have the following estimate on its $m$-th Fourier coefficient \cite[(3.1.22)]{motohashi1997spectral}  
\begin{equation}
|a_f(m)|\ll m^{\frac{1}{4}+\frac{k-1}{2}+\delta},
\end{equation}
 where the implied constant depends only on $\delta$. Moreover, for $\Re(\alpha)<0$ that for any $\varepsilon>0$, we have   
 \begin{equation}
    | a_f(m)  \sigma_{\alpha}(m) m^{-s}| \ll m^{\frac{1}{4}+ \frac{k-1}{2}+\delta-\Re(s) + \varepsilon},
\end{equation}
where the implied constant depends only on $\delta$ and $\varepsilon$. Hence, $        \sum_{m=1}^\infty a_f(m) \sigma_\alpha(m)  m^{-s}$ absolutely converges for $\Re(s) > \frac{5}{4} + \frac{k-1}{2}+\delta$, and  
we have by  \cite[(3.2.7)]{motohashi1997spectral} and  \cite[proof of Proposition 2.1 and (2.11)]{diamantis2010kernels}
\begin{equation}
    \begin{split}
        \sum_{m=1}^\infty a_f(m) \sigma_\alpha(m)  m^{-s} = \frac{L(s,f) L(s-\alpha,f)}{\zeta(2 \cdot (s-\frac{k-1}{2}) - \alpha )}.
    \end{split}
\end{equation}
As a consequence, for $\Re (r_2-r_1) > 3/2$,
\begin{equation}\label{eq:Hecke_sum_product_of_Lvalues}
    \begin{split}
        \sum_{m=1}^\infty    a_f(m) \sigma_{-r_1}(m) m^{-d-r_2-1} &= \frac{L( d+r_2+1, f) L( d+r_2+r_1+1,f ) }{\zeta( r_2 + 1 )}.
    \end{split}
\end{equation}

\subsection{Bessel functions of the first type}
Denote by \(J_\eta(x)\) the Bessel function of the first kind, defined in \cite[(10.2.2)]{NIST}. For \(\eta \in \mathbb{C} \setminus \mathbb{Z}_{<0}\) and \(x > 0\), the function \(J_\eta(x)\) satisfies 
  \begin{equation}\label{asymp:besselj_skdjflksjdlfjslkdf}
    \begin{split}
      |  J_{\eta} ( x) | \ll
      \begin{cases}
      x^{ \Re (\eta)}, \quad &x \to +  0,\\
    x^{-1/2}, \quad &x \to +\infty  
      \end{cases}
        \end{split}
    \end{equation}
     (cf. \cite[10.7.3, 10.7.8]{NIST}).
As an elementary consequence, we deduce that for $x>0$,   $\Re(r_2) > \Re(r_1) + 1 > 1$, $\Re (d) > 0$, and $0 < \varepsilon < 1/2,$  we have 
    \begin{equation}\label{eq:J_ineq}
     |   J_{2d+r_1+r_2+1} ( x ) | \ll  x^{ \Re (r_2-r_1)-1-\varepsilon}.
    \end{equation}

Moreover, we need the following  reformulation of \cite[10.9.22]{NIST} which gives the Mellin transform of Bessel $J$-functions:
\begin{prop}
Let $r_1, r_2, d \in \mathbb{C}$ satisfy the conditions \eqref{eq:cond_1} and \eqref{cond_2a}. Then for $ \Re (\frac{1-r_1-r_2}{2} ) < c < d+1$ and $x>0$ we have 
\begin{equation}\label{eq:gamma_mellin_inverstion}
	\begin{split}
J_{2d+r_1+r_2+1} ( x) = 		\frac{1}{2 \pi i} \int_{c-i \infty}^{c+i \infty}   \frac{   \Gamma(d-s+1)  }{\Gamma (d+s+r_1+r_2+1)  }       x^{2s+r_1+r_2-1} ds .
	\end{split}
\end{equation} 
\end{prop}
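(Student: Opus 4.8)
The plan is to obtain \eqref{eq:gamma_mellin_inverstion} directly from the Mellin--Barnes representation of the Bessel function \cite[10.9.22]{NIST} by a single linear change of the integration variable; no new analysis is needed beyond verifying the contour conditions. Recall that \cite[10.9.22]{NIST} expresses $J_\nu$, for a suitable vertical contour $(c_0) = \{\Re(t) = c_0\}$, as
\begin{equation*}
J_\nu(x) = \frac{1}{2\pi i}\int_{(c_0)} \frac{\Gamma(t)}{\Gamma(\nu - t + 1)}\left(\frac{x}{2}\right)^{\nu - 2t}\, dt.
\end{equation*}
Setting $\nu = 2d + r_1 + r_2 + 1$, so that $J_\nu = J_{2d+r_1+r_2+1}$, and substituting $t = d + 1 - s$ is all that is required; I would then simply read off the transformed integrand and contour.

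Concretely, under $t = d+1-s$ the numerator becomes $\Gamma(t) = \Gamma(d - s + 1)$ and the denominator becomes $\Gamma(\nu - t + 1) = \Gamma(d + s + r_1 + r_2 + 1)$, which are exactly the two Gamma factors in \eqref{eq:gamma_mellin_inverstion}; the exponent transforms as $\nu - 2t = 2s + r_1 + r_2 - 1$, and since $dt = -ds$ the reversal of orientation restores the positive sign. The one place that needs care is the bookkeeping of the power of $2$: after expanding $\left(\tfrac{x}{2}\right)^{\nu-2t}$, the residual powers of $2$ must be reconciled with the normalization on the right-hand side of \eqref{eq:gamma_mellin_inverstion}, which I would check against the argument-scaling convention for the Bessel function used in \eqref{eq:integral_after_contour_changedfsdd4}.

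It then remains to verify that the transformed contour lies in the admissible strip and that the integral converges there. The poles of $\Gamma(d-s+1)$ sit at $s = d + 1 + m$, $m \in \mathbb{Z}_{\ge 0}$, which lie to the right of $\Re(s) = c$ precisely when $c < \Re(d) + 1$; since $\Gamma(d+s+r_1+r_2+1)^{-1}$ is entire these are the only singularities, so closing to the right and summing residues recovers the defining power series of $J_{2d+r_1+r_2+1}$ --- the consistency check I would run. For convergence on $\Re(s) = c$, Stirling's formula gives $\left|\frac{\Gamma(d-s+1)}{\Gamma(d+s+r_1+r_2+1)}\right| \asymp |\Im s|^{-2c - \Re(r_1) - \Re(r_2)}$ as $|\Im s| \to \infty$ (the two factors $e^{-\pi|\Im s|/2}$ cancel), so absolute convergence holds exactly when $c > \Re\!\left(\tfrac{1 - r_1 - r_2}{2}\right)$, the lower bound in the hypothesis.

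The main (and essentially only) obstacle is confirming that the admissible strip $\Re\!\left(\tfrac{1-r_1-r_2}{2}\right) < c < \Re(d)+1$ is nonempty for the complex parameters at hand, so that a legitimate contour exists and \cite[10.9.22]{NIST} applies for complex order $\nu$. This is where \eqref{eq:cond_1} enters: the strip is nonempty if and only if $\Re(2d + r_1 + r_2) > -1$, and $2d + r_1 + r_2 = k - 2 \ge 4 > -1$ by \eqref{eq:cond_1} together with $k \ge 6$. I would invoke this, along with \eqref{cond_2a}, to pin down the contour and to legitimize the Mellin--Barnes representation for complex $\nu$, after which the change of variables above completes the proof.
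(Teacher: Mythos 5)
Your proof is correct and is essentially the paper's own argument: the paper presents this proposition as a ``reformulation'' of \cite[10.9.22]{NIST} with no further justification, and your substitution $t=d+1-s$, together with the pole locations $s=d+1+m$, the Stirling estimate giving absolute convergence for $c>\Re\bigl(\tfrac{1-r_1-r_2}{2}\bigr)$, and the non-emptiness of the strip via $\Re(2d+r_1+r_2)=k-2>-1$, supplies exactly the details the paper leaves implicit. Your flag on the power of $2$ is the one substantive point: carried out literally, the substitution produces $(x/2)^{2s+r_1+r_2-1}$ rather than $x^{2s+r_1+r_2-1}$ (closing the stated contour to the right recovers the Taylor series of $J_{2d+r_1+r_2+1}(2x)$, not of $J_{2d+r_1+r_2+1}(x)$), so the display \eqref{eq:gamma_mellin_inverstion} carries a typo, whereas the paper's actual application in \eqref{eq:kappa_skjdfkjsdflgjsfg} --- where $J_{2d+r_1+r_2+1}(4\pi\sqrt{mn}/\ell)$ is expanded with base $2\pi\sqrt{mn}/\ell$ --- already uses the corrected $(x/2)$ form.
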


\subsection{Hypergeometric functions}

When writing $\, _2F_1(\cdot, \cdot, \cdot; z)$ with $z \in \mathbb{C} \setminus [1, \infty)$, we mean its principle branch, that is, the meromorphic continuation from $|z|<1$ (defined by the absolutely convergent sum \cite[15.2.1]{NIST}) to $z \in \mathbb{C} \setminus [1, \infty)$. For $x \in \mathbb{R}$, we denote 
\begin{equation}\label{eq:hypergeom_above_and_below_the_split}
    \, _2 F_1 (\cdot, \cdot, \cdot; x \pm i0)  = \lim_{  \stackrel{\varepsilon \to 0}{\varepsilon>0} }  \, _2 F_1 (\cdot, \cdot, \cdot; x \pm i \varepsilon).
\end{equation}
For $x \ge 1$, we write
\begin{equation}\label{def:2f1_positionevee}
    \, _2 F_1 (\cdot, \cdot, \cdot; x)  = \frac{    \, _2 F_1 (\cdot, \cdot, \cdot; x + i0) +     \, _2 F_1 (\cdot, \cdot, \cdot; x - i0)}{2}.
\end{equation}

\begin{remark}\label{rem:remark_on_difference_between_cuts} 
For computational convenience we may rewrite the second term on the right hand side of \eqref{eq:Q_convolution_form_main_theorem}. Let $n \in \mathbb{N}$ and let $n_1, n_2 \in \mathbb{Z}$ such that $n_1+n_2 = n$.
Substituting $a = d+1$, $b=d+r_1+1$, $c = k = 2d+2+r_1+r_2$ to \cite[15.2.3]{NIST} and using the symmetry in the first two components, we have for $G_\pm(n_1, n_2)$ from \eqref{G:def_sksslldffdddd} that 
    \begin{equation}
    \begin{split}
      &G_+(n_1, n_2) - G_-(n_1,n_2)  = \\
      &= \delta_{ n_1, n_2 > 0  } \, \frac{ \Gamma(k) }{\Gamma(r_2+1)} \left[  2  \pi i \frac{  (\frac{n}{n_1}-1)^{r_2} }{\Gamma (d+1) \Gamma (d+r_1+1)} \, _2F_1(k-d-1,d+r_2+1;r_2+1;1-\frac{n}{n_1})   \right],
    \end{split}
    \end{equation}
    where $\delta_{ n_1, n_2 > 0  }$ is $1$ for $n_1 > 0$ and $n_2 > 0$ and is equal to zero otherwise.
\end{remark}

As a consequence of \cite[Volume III, 2.21.1.1]{prud}, we obtain the following lemma  which gives the Mellin transform of hypergeometric functions: 
{\lem\label{lem:mellin1} Let $C$ be  a vertical contour such that $0<\Re(s)<\Re(d+1),\Re(d+r_1+1)$, let $w, t \in \mathbb{C}$ with $\left|\arg\left(\frac{w}{t}\right)\right|<\pi$, then 
\begin{equation} \label{eq:mellin_transform_hypergeometric_function}
\begin{split}
  \frac{1}{2 \pi i} &\int_C \frac{\Gamma (d-s+1) \Gamma (s)  \Gamma (r_1+s)}{\Gamma (d+r_1+r_2+s+1)} w^{-d+s-1} t^{-s} ds  \\
    &=  t^{-1-d} \frac{\Gamma (d+1) \Gamma (d+r_1+1)}{  \Gamma(2 d+r_1+r_2+2) }  \, _2 F_1\left(d+1,d+r_1+1;2 d+r_1+r_2+2;-\frac{w}{t}\right).
\end{split}
\end{equation} 
}

\section{Proofs}\label{sec:proofs}

We first  prove a restricted version of Theorem~\ref{thm:THEOREM-A}, i.e., Theorem \ref{thm:THEOREM-A-precursor}. In Section \ref{sec:mero}, we extend the result to obtain the full statement via the meromorphic continuation.

{\thm \label{thm:THEOREM-A-precursor}
Theorem~\ref{thm:THEOREM-A} holds under the additional assumption 

\begin{equation}\label{cond_2a}
\operatorname{Re}(r_2) > \operatorname{Re}(r_1) + 2 > 2.
\end{equation}
}

 To prove Theorem~\ref{thm:THEOREM-A-precursor}, in Section \ref{sec:petersson}, we evaluate 
	$\mathcalSsumdef$ as in \eqref{eq:integral_after_contour_changedfsdd4} using the Petersson trace formula to get a sum over Hecke cusp forms.
 In Section \ref{sec:InvMellin}, we express the $J$-Bessel function in terms of the inverse Mellin transform to get the convolution sum \eqref{eq:convsum} and remaining ``boundary terms" $Z$ as in \eqref{def:Zabd}. We then set the resulting calculations equal.

  A key step in the proof is rewriting certain functions using their Mellin transforms and shifting the integration contour. In Section \ref{sec:preliminaryresults}, we introduce a related integrand $\mathcal{E}(s)$, analyze its poles in a strip, and study its behavior in vertical strips to justify the contour shift later in the proof.
\subsection{Preliminary Results}\label{sec:preliminaryresults}

\subsubsection{Definition of $\mathcal{E}(s)$ and its poles in the strip $\Re(s) < \Re (d)+1$}\label{sec:poles_of_E_sdklfjfjfskdkdkkk3}

For for $r_1, r_2, d$ as in \eqref{eq:cond_1} and \eqref{cond_2a}, we define  
\begin{equation}\label{eq:gammaeps}\mathcal{E}(s):=\frac{   \Gamma(d-s+1)  }{\Gamma (d+s+r_1+r_2+1)  }      \left( \frac{2 \pi \sqrt{n}}{\ell} \right)^{2s+r_1+r_2-1} E(1-s-r_1,-\bar{\kestermann}/\ell,-r_1).\end{equation}
Note that the factor $\frac{   \Gamma(d-s+1)  }{\Gamma (d+s+r_1+r_2+1)  }$ does not have poles in the half-plane $\Re(s) < d+1$. 
On the other hand, from \eqref{eq:estermann_zero}, $E(1-s-r_1,-\bar{\kestermann}/\ell,-r_1)$  has poles at 
$s=-r_1$ and  $s=0$. The condition \eqref{cond_2a} implies $r_1 \neq 0$, and hence for $\Re(s)<d+1$, the function $\mathcal{E}(s)$, defined by \eqref{eq:gammaeps},  
 has  simple poles at $s=0$ and $s=-r_1 $  with residues
\begin{equation}\label{eq:pole0}
\begin{split}
\text{Res}_{s=0} \mathcal{E}(s) 
&= -\frac{   \Gamma(d+1)  }{\Gamma (d+r_1+r_2+1)  }      \left(2 \pi \sqrt{n} \right)^{r_1+r_2-1}\ell^{-r_2} \zeta(1-r_1)
\end{split}    
\end{equation}
and
\begin{equation}\label{eq:pole-r1}
    \begin{split}
\text{Res}_{s=-r_1} \mathcal{E}(s) 
 =- \frac{   \Gamma(d+r_1+1)  }{\Gamma (d+r_2+1)  }      \left( 2 \pi \sqrt{n}\right)^{-r_1+r_2-1}   \ell^{-r_2}\zeta(1+r_1).     
    \end{split}
\end{equation}

\subsubsection{Bounds of Estermann zeta function and $\mathcal{E}(s)$ in vertical strips}

Let $s=\sigma+i\tau$ and $0<\delta<1$. By \cite[Theorem 12.23]{apostol}, for $|\tau|\geq 1$ and  $-m-\delta < \sigma<-m+\delta$ where   $m\in\mathbb{N}_0$, the Hurwitz zeta function has the following bound:
 \begin{equation}
  |  \zeta(\sigma + i \tau, \cdot ) |   \lesssim |\tau|^{m+1+\delta}.
  \end{equation} 
Assume $\Re(s)=\sigma >-\Re(r_1)-\delta$ for $0<\delta<\max\{1,\{\Re(r_1)\}\}$ then there exists $\widetilde\delta \in \mathbb{R}$ such that  $0<\widetilde\delta<1$ and $-\lfloor \Re(r_1)\rfloor-\widetilde\delta<\sigma$ and thus
\begin{equation}
\begin{split}
  |  \zeta(s, \cdot ) | &  \lesssim |\tau|^{\lfloor{\Re(r_1)}\rfloor+1+\widetilde\delta}  \quad\text{and}\quad   
  |  \zeta(s+r_1, \cdot ) |  \lesssim |\tau|^{1+\widetilde\delta + \{ \Re(r_1)\}}.
\end{split}
\end{equation}
By \eqref{eq:representation_of_estermann_via_Hurwitz_zeta}, for $s \in \mathbb{C}$ with $\Re(s) > - \Re(r_1)-\delta$, we have 
\begin{equation}\label{eq:elemetary_estimate_on_Estermann_zeta}
|    E(s; \cdot  , -r_1) | \lesssim |\tau|^{\Re(r_1)+2+2 \widetilde\delta}.
\end{equation}

Now we proceed to the evaluation of $\mathcal{E}$ in vertical strips. In order to do so, we rewrite the functional equation for the Estermann zeta function, \eqref{eq:functional_equation_estermann_zeta}, as 
\begin{equation}\label{eq:FE}
	\begin{split}
&	 E(1-r_1-s ;  -\bar{\kestermann} / \ell, -r_1) \\
& \ \ \ \ \ \ \ \ \  = \frac{ \Gamma(s)      }{ \Gamma(1-r_1-s)  }  \Big(\frac{\ell}{2\pi}\Big)^{2 s+r_1-1} \left(  \frac{\cos(\pi s + \frac{\pi r_1}{2}) E(s;\kestermann/\ell,-r_1) + \cos(- \pi r_1/2 ) E(s;-\kestermann/\ell,-r_1)  }{\sin(\pi (r_1+s))} \right).
	\end{split}
\end{equation}
Substituting  \eqref{eq:FE} into $\mathcal{E}(s)$,
we obtain 
\begin{equation}\label{eq:E_funeq}
	\begin{split}
		\mathcal{E}(s) 
          &   = 		   \left( \frac{2 \pi }{\ell} \right)^{r_2}    \left(  \sqrt{n}  \right)^{r_1+r_2-1}    \frac{   \Gamma(d-s+1)   \Gamma(s)  }{\Gamma (d+s+r_1+r_2+1) \Gamma(1-r_1-s)  } \\
&      \ \ \ \ \ \ \ \ \  \ \ \ \ \ \  \ \ \ \ \ \  \times       \left(  \frac{\cos(\pi s + \frac{\pi r_1}{2}) E(s;\kestermann/\ell,-r_1) + \cos(- \pi r_1/2 ) E(s;-\kestermann/\ell,-r_1)  }{\sin(\pi (r_1+s))} \right) n^{s} .
    \end{split}
\end{equation}
As a corollary to \eqref{eq:E_funeq} together with the estimates of the Estermann zeta function, we obtain the following lemma:
\begin{lemma}\label{lem:contour} Let $s=\sigma+i\tau$. For all $\delta>0$ and fixed $\sigma > -\Re(r_1)- \delta$,
\begin{equation}\label{eq:asymp_Omega}
\begin{split}
    |\mathcal{E}(s) |   \lesssim |\tau|^{{\text{Re}(r_1-r_2)+1+2\delta}}, \quad \tau \to \pm \infty.
\end{split}
\end{equation}
In particular, for $\text{Re}(r_2-r_1) > 1$, we get $|\mathcal{E}(s)| \to 0$ as $\tau\to \pm \infty$. 
\end{lemma}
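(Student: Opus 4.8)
The plan is to estimate each factor appearing in the expression \eqref{eq:E_funeq} for $\mathcal{E}(s)$ separately as $\tau \to \pm\infty$ with $\sigma$ held fixed, and then to multiply the resulting bounds. The reason for having rewritten $\mathcal{E}(s)$ through the functional equation \eqref{eq:FE} is precisely that the Estermann factors now appear as $E(s; \pm\kestermann/\ell, -r_1)$ with $\Re(s) = \sigma > -\Re(r_1) - \delta$, which is exactly the range where the bound \eqref{eq:elemetary_estimate_on_Estermann_zeta} applies and yields $|E(s; \cdot, -r_1)| \lesssim |\tau|^{\Re(r_1)+2+2\widetilde\delta}$.

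First I would handle the ratio of Gamma functions $\frac{\Gamma(d-s+1)\,\Gamma(s)}{\Gamma(d+s+r_1+r_2+1)\,\Gamma(1-r_1-s)}$ using Stirling's formula in the form $|\Gamma(\sigma_0 + i\tau_0)| \sim \sqrt{2\pi}\,|\tau_0|^{\sigma_0 - 1/2} e^{-\pi|\tau_0|/2}$ as $|\tau_0|\to\infty$. Each of the four Gamma arguments has imaginary part of size $|\tau| + O(1)$, so the two numerator factors contribute a combined $e^{-\pi|\tau|}$ and the two denominator factors contribute $e^{-\pi|\tau|}$; these exponential factors cancel exactly. Collecting the surviving powers of $|\tau|$ gives, after a short computation, the net polynomial decay $\lesssim |\tau|^{-\Re(r_2)-1}$.

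Next I would treat the trigonometric quotient. As $\tau \to \pm\infty$ both $\cos(\pi s + \tfrac{\pi r_1}{2})$ and $\sin(\pi(r_1 + s))$ grow like $\tfrac12 e^{\pi|\tau|}$ in modulus, whereas $\cos(-\pi r_1/2)$ is a fixed constant. Hence in the numerator the term $\cos(\pi s + \tfrac{\pi r_1}{2})\,E(s; \kestermann/\ell, -r_1)$ dominates: dividing by $\sin(\pi(r_1+s))$ makes the exponential factors cancel once more, while the term $\cos(-\pi r_1/2)\,E(s; -\kestermann/\ell, -r_1)$ contributes only the exponentially small quantity of order $e^{-\pi|\tau|}|\tau|^{\Re(r_1)+2+2\widetilde\delta}$. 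Inserting \eqref{eq:elemetary_estimate_on_Estermann_zeta} then bounds the whole quotient by $\lesssim |\tau|^{\Re(r_1)+2+2\widetilde\delta}$. Since $|n^s| = n^\sigma$ and the prefactors $(2\pi/\ell)^{r_2}$ and $(\sqrt n)^{r_1+r_2-1}$ are independent of $\tau$, multiplying the three estimates yields $|\mathcal{E}(s)| \lesssim |\tau|^{-\Re(r_2)-1}\cdot|\tau|^{\Re(r_1)+2+2\widetilde\delta} = |\tau|^{\Re(r_1-r_2)+1+2\widetilde\delta}$, and choosing $\widetilde\delta < \delta$ gives the claim; the final assertion follows since this exponent is negative whenever $\Re(r_2 - r_1) > 1$.

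I expect the main obstacle to be the bookkeeping of the exponential factors: one must verify that the $e^{-\pi|\tau|}$ from the two numerator Gamma factors, the $e^{-\pi|\tau|}$ from the two denominator Gamma factors, and the $e^{\pi|\tau|}$ from $\cos(\pi s + \tfrac{\pi r_1}{2})$ against the $e^{\pi|\tau|}$ from $\sin(\pi(r_1+s))$ all cancel, so that only the polynomial factor remains; any miscounting would spuriously produce genuine exponential growth or decay and invalidate the estimate. A secondary point, easily dispatched, is to confirm that for fixed $\sigma$ the denominator $\sin(\pi(r_1+s))$ stays bounded away from zero as $|\tau|\to\infty$, so that no poles of the quotient interfere; this is immediate from its exponential growth in $|\tau|$.
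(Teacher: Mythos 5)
Your proposal is correct and follows essentially the same route as the paper: bound the Gamma-function ratio by $|\tau|^{-\Re(r_2)-1}$ (the paper cites the ratio asymptotic \cite[5.11.12]{NIST} where you apply Stirling to each factor, which is equivalent), observe that the trigonometric quotient is $O(1)$ because the exponential growths cancel, insert the Estermann bound \eqref{eq:elemetary_estimate_on_Estermann_zeta}, and multiply. The only difference is cosmetic bookkeeping of the exponential factors, which you carry out correctly.
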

\begin{proof}
For $|\operatorname{ph} s| \le \pi - \delta $, where $\operatorname{ph} s$ denotes the phase of $s$,  by \cite[5.11.12]{NIST} we have 
\begin{equation} \left|\frac{   \Gamma(d-s+1)   \Gamma(s)  }{\Gamma(d+s+r_1+r_2+1) \Gamma(1-r_1-s)  }  \right|\sim |s|^{-\text{Re}(r_2)-1}.\end{equation}
Moreover, for $s = \sigma + i \tau$ with fixed $\sigma$ and  $\tau  \to \pm \infty$, we have 
$\cot(\pi s) = O( 1)$
and $\csc(\pi s) = O( 1)$.
From~\eqref{eq:elemetary_estimate_on_Estermann_zeta} we have a bound on the Estermann zeta function in vertical strips: 
for  $\sigma > -\Re(r_1)- \delta$ where $\delta>0$, we have 
\begin{equation}
|    E(s; \cdot  , -r_1) | \lesssim |\tau|^{\text{Re}(r_1)+2+2 \delta},
\end{equation}
and so by \eqref{eq:E_funeq}, 
\begin{align}\label{eq:estimate_on_E_sasdee}
 |\mathcal{E}(s) |\lesssim  |s|^{-\text{Re}(r_2)-1}|\tau|^{\text{Re}(r_1)+2+2 \delta}n^s.
\end{align}
For fixed $\sigma$, the right hand side of \eqref{eq:estimate_on_E_sasdee} can be evaluated as 
\begin{equation}|\tau|^{-\Re(r_2)-1}|\tau|^{\text{Re}(r_1)+2+2 \delta}    = |\tau|^{\text{Re}(r_1-r_2)+1+2\delta}.\end{equation} 
\end{proof}
\begin{remark}
In order to obtain \eqref{eq:asymp_Omega}, we first applied the functional equation to the Estermann zeta function, and only then evaluated it using estimates for the Hurwitz zeta function instead of working directly with the Hurwitz zeta function. The approach above yields more precise estimates, as it uses information about the Estermann zeta function beyond its expression as a sum of products of Hurwitz zeta functions.
\end{remark}

\subsubsection{Mellin transforms}

{\lem \label{lemma_gamma_function_stuflöskdflkdjlkfjlksdfjlsd} Let $C$ be  a vertical contour such that $0<\Re(s)<\Re(d+1),\Re(d+r_1+1)$, for $x > 0$ we have,
\begin{equation}\label{eq:weirc_cos_integral}
\begin{split}
 \frac{1}{2 \pi i}  &\int_{C}\frac{   \Gamma(d-s+1)   \Gamma(s) \Gamma(r_1 + s) }{\Gamma (d+s+r_1+r_2+1)  }        \cdot   \cos(\pi s + \frac{\pi r_1}{2})   x^{-s}  ds 
 \\
  & = - \cos(\pi d + \pi r_1 / 2)  \left(\frac{1}{x}\right)^{1+d} \frac{\Gamma (d+1) \Gamma (d+r_1+1)}{  \Gamma(2 d+r_1+r_2+2) }  \, _2 F_1\left(d+1,d+r_1+1;2 d+r_1+r_2+2;\frac{1}{x}\right)\\
 & \ \ \ \ \ \ - \sin(\pi d + \pi r_1 / 2)\frac{ \pi  x^{-1-d}  (\frac{1}{x}-1)^{r_2} }{\Gamma(r_2+1)} \, _2F_1\left(d+r_2+1, k-d-1;r_2+1;1-\frac{1}{x}\right).
  \end{split}
\end{equation} }
\begin{proof} Since \begin{equation}
\begin{split}
\cos(\pi s + \pi r_1 / 2)  
&= \cos (\pi s - \pi d) \cos(\pi d + \pi r_1 / 2) -\sin (\pi s - \pi d) \sin(\pi d + \pi r_1 / 2),
\end{split}
\end{equation} we 
can rewrite \eqref{eq:weirc_cos_integral} as 
\begin{equation}\label{eq:integrals_from_inverse_Mellin_with_trigonometry}
\begin{split}
&\cos(\pi d + \pi r_1 / 2) \frac{1}{2 \pi i}  \int_{C} \frac{   \Gamma(d-s+1)   \Gamma(s) \Gamma(r_1 + s) }{\Gamma (d+s+r_1+r_2+1)  }        \cdot   \cos (\pi s - \pi d)   x^{-s}  ds \\
&  \ \ \ \ \  -  \sin(\pi d + \pi r_1 / 2) \frac{1}{2 \pi i}  \int_{C}\frac{   \Gamma(d-s+1)   \Gamma(s) \Gamma(r_1 + s) }{\Gamma (d+s+r_1+r_2+1)  }        \cdot   \sin (\pi s - \pi d)    x^{-s}  ds .
\end{split}
\end{equation}

For the first integral in \eqref{eq:integrals_from_inverse_Mellin_with_trigonometry}, we substitute  $w = -1+i\varepsilon$ and $w = -1-i\varepsilon$ into Lemma \ref{lem:mellin1}. 
Following the notation  \eqref{def:2f1_positionevee}, we obtain 
\begin{equation}
\begin{split}
     \, _2 F_1&\left(d+1,d+r_1+1;2 d+r_1+r_2+2;-\frac{-1+i\varepsilon}{t}\right) \\
    & \ \ \ \ \ \ \ \ \ \ \ \ +     \, _2 F_1\left(d+1,d+r_1+1;2 d+r_1+r_2+2;-\frac{-1-i\varepsilon}{t}\right) \\
    & \ \ \ \ \ \ \ \ \ \ \ \ \ \ \ \  \ \ \ \ \ \ \ \  \stackrel{ \varepsilon \to 0 }{\to} 2  \cdot   \, _2 F_1\left(d+1,d+r_1+1;2 d+r_1+r_2+2;\frac{1}{t}\right)
\end{split}
\end{equation}
and 
\begin{equation}\label{eq:2f1atvutis44}
\begin{split}
    (-1+i\varepsilon)^{s-d-1}+(-1-i\varepsilon)^{s-d-1} & 
    \stackrel{ \varepsilon \to 0 }{\to} 
    - 2 \cos(\pi ( s -  d) ), \\ 
\end{split}
\end{equation}
so we consider
\begin{equation}\label{eq:ssaadsdsdqq2213ll}
\begin{split}
- \frac{1}{4 \pi i}& \int_C \frac{\Gamma (d-s+1) \Gamma (s)  \Gamma (r_1+s)}{\Gamma (d+r_1+r_2+s+1)} \left((-1+i\varepsilon)^{s-d-1}+(-1-i\varepsilon)^{s-d-1} \right)x^{-s} ds\\
 & \to \frac{1}{2 \pi i} \int_C \frac{\Gamma (d-s+1) \Gamma (s)  \Gamma (r_1+s)}{\Gamma (d+r_1+r_2+s+1)} \cos( \pi s - \pi d   ) x^{-s} ds. 
    \end{split}
\end{equation}
By Lemma \ref{lem:mellin1},  for $0<\Re(s)<\Re(d+1),\Re(d+r_1+1)$,  the left hand side of \eqref{eq:ssaadsdsdqq2213ll} becomes 
\begin{equation}
\begin{split}
 &- x^{-d-1} \frac{\Gamma (d+1) \Gamma (d+r_1+1)}{ 2 \Gamma(2 d+r_1+r_2+2) }  \Big(\, _2 F_1\left(d+1,d+r_1+1;2 d+r_1+r_2+2;-\frac{-1+i\varepsilon}{x}\right)\\ &  \ \ \ \   \ \ \ \ \ \ \ \ \ \ \ \ \ \ \ \ + \,_2 F_1\left(d+1,d+r_1+1;2 d+r_1+r_2+2;-\frac{-1-i\varepsilon}{x}\right)\Big)
  \end{split}
\end{equation}
and taking the limit $\varepsilon\to 0$, we have 
\begin{equation}
\begin{split}
- \left(\frac{1}{x}\right)^{1+d} &\frac{\Gamma (d+1) \Gamma (d+r_1+1)}{  \Gamma(2 d+r_1+r_2+2) }  \, _2 F_1\left(d+1,d+r_1+1;2 d+r_1+r_2+2;\frac{1}{x}\right)\\
  &=  \frac{1}{2 \pi i} \int_C \frac{\Gamma (d-s+1) \Gamma (s)  \Gamma (r_1+s)}{\Gamma (d+r_1+r_2+s+1)} \cos( \pi s - \pi d   ) x^{-s} ds .
    \end{split}
\end{equation}

To study the second term in  \eqref{eq:integrals_from_inverse_Mellin_with_trigonometry}, we note 
\begin{equation}
\begin{split}
    (-1-i\varepsilon)^{s-d-1}-(-1+i\varepsilon)^{s-d-1} &\, \stackrel{ \varepsilon \to 0 }{\to}\,  2i \sin( \pi (s-d-1) )  = 2i \sin( \pi (s-d) ), \\ 
\end{split}
 \end{equation}  
then 
\begin{equation}\label{eq:löööllsslldlld3332}
\begin{split}
   -\frac{1}{4\pi }&  \int_{C}\frac{   \Gamma(d-s+1)   \Gamma(s) \Gamma(r_1 + s) }{\Gamma (d+s+r_1+r_2+1)  }        \cdot   \left((-1-i\varepsilon)^{s-d-1}-(-1+i\varepsilon)^{s-d-1}\right)   x^{-s}  ds 
    \\ & \stackrel{ \varepsilon \to 0 }{\to}
  \frac{1}{ 2\pi i}  \int_{C}\frac{   \Gamma(d-s+1)   \Gamma(s) \Gamma(r_1 + s) }{\Gamma (d+s+r_1+r_2+1)  }        \cdot   \sin (\pi s - \pi d)    x^{-s}  ds .
\end{split}
\end{equation}
Furthermore, by Lemma \ref{lem:mellin1}, for $0<\Re(s)<\Re(d+1),\Re(d+r_1+1)$,
we may rewrite the left hand side of \eqref{eq:löööllsslldlld3332} as 
\begin{equation}
\begin{split}
      &  \frac{1}{2i} x^{-1-d} \frac{\Gamma (d+1) \Gamma (d+r_1+1)}{  \Gamma(2 d+r_1+r_2+2) }   \big(\,_2 F_1\left(d+1,d+r_1+1;2 d+r_1+r_2+2;\frac{1+i\varepsilon}{x}\right)\\ & \ \ \ \ \ \ \ \ \ \ \ \ \ \ \ \ \ -\,_2 F_1\left(d+1,d+r_1+1;2 d+r_1+r_2+2;\frac{1-i\varepsilon}{x}\right)\big)\\
       & \to  x^{-1-d}     \frac{ \pi (\frac{1}{x}-1)^{r_2}  }{ \Gamma(r_2+1) } \, _2F_1\left(d+r_2+1, k-d-1;r_2+1;1-\frac{1}{x}\right)
\end{split}
\end{equation} as $\varepsilon\to 0$
since 
  \begin{equation}
    \begin{split}
  &     \frac{1}{\Gamma(k)}\left( \, _2 F_1 \left(d+1,d+r_1+1;k;\frac{1+i \varepsilon}{x}\right)  - \, _2 F_1\left(d+1,d+r_1+1;k;\frac{1-i \varepsilon}{x}\right)\right) \\
  &  \ \ \ \ \ \ \  \ \ \ \ \ \ \ \to  2  \pi i \frac{  (\frac{1}{x}-1)^{r_2} }{\Gamma(r_2+1)\Gamma (d+1) \Gamma (d+r_1+1)} \, _2F_1\left(d+r_2+1, k-d-1;r_2+1;1-\frac{1}{x}\right).
    \end{split}
    \end{equation}
    Thus 
    \begin{equation}
\begin{split}
  -  \sin(\pi d + \pi r_1 / 2)& \frac{1}{2 \pi i}  \int_{C}\frac{   \Gamma(d-s+1)   \Gamma(s) \Gamma(r_1 + s) }{\Gamma (d+s+r_1+r_2+1)  }        \cdot   \sin (\pi s - \pi d)    x^{-s}  ds\\
   & = -     \frac{ \pi \sin(\pi d + \pi r_1 / 2) x^{-1-d}  (\frac{1}{x}-1)^{r_2} }{\Gamma(r_2+1)} \, _2F_1\left(d+r_2+1, k-d-1;r_2+1;1-\frac{1}{x}\right),
\end{split}
\end{equation}
that finishes the proof.
\end{proof}

\subsection{Toward Theorem \ref{thm:THEOREM-A-precursor}: Contribution from Petersson trace formula}\label{sec:petersson}

Let $n>0$ and let $r_1, r_2, d \in \mathbb{C}$ satisfy \eqref{eq:cond_1} and \eqref{cond_2a}, let $\kappa(m) = \sigma_{-r_1}(m)$ and let 
\begin{equation}\label{eq:definition_R}
	-R = \frac{1}{2} (r_1-r_2-1),
\end{equation}
so that \eqref{eq:integral_after_contour_changedfsdd4} becomes
\begin{equation}\label{eq:sum_conv_defined}
        \mathcalSsumdef= 	 \sum_{m=1}^\infty \sigma_{-r_1}(m) m^{-R}  \sum_{\ell=1}^\infty \frac{2 \pi }{\ell} S(-m,-n; \ell)  J_{2d+r_1+r_2+1} ( \tfrac{4 \pi \sqrt{mn}}{\ell}).
\end{equation}


We now express $\mathcalSsumdef$ in terms of $L$-values. Substituting the Petersson Trace Formula (Lemma \ref{lem:petersson_trace_formula}) into the definition of $\mathcalSsumdef$ as in \eqref{eq:integral_after_contour_changedfsdd4},  we obtain for $\Re(r_2-r_1) > 2$,
\begin{equation}
\begin{split}\label{eq:Spetersson1}
        \mathcalSsumdef = &  \sum_{m=1}^\infty \sigma_{-r_1}(m) m^{-R} (-1)^{d+1+\tfrac{r_1+r_2}{2}} \frac{\Gamma(k-1)}{ (4 \pi \sqrt{mn})^{2d+r_1+r_2+1} } \sum_{f \in \mathcal{F}} \frac{a_f(n) \overline{a_f(m)}}{\langle \bar f, \bar f \rangle }   \\ 
         & \ \ \ \ \ \ \ \ \ \ \ \ \ \ \ \ \ \ - \sum_{m=1}^\infty \sigma_{-r_1}(m) m^{-R} (-1)^{d+1+\tfrac{r_1+r_2}{2}} \delta_{m=n}.
\end{split}    
\end{equation}

Note that \eqref{eq:divisor_estimate} and \eqref{eq:J_ineq} together with a trivial estimate on Kloosterman sums $|S(-m,-n;\ell)| \le \ell$  imply that there exists $\varepsilon>0$ such that every summand in the right hand side of \eqref{eq:sum_conv_defined} can be evaluated from above by
\begin{equation}\label{eq:convergence_discussion_absdlslskdkflk}    (\frac{\sqrt{m}}{\ell})^{ \Re (r_2-r_1)-1-\varepsilon} \cdot m^{ \Re(   \frac{1}{2} (r_1-r_2-1)) } = \ell^{-\Re(r_2-r_1)+1+\varepsilon} m^{-1 - \varepsilon/2}.\end{equation}
Thus the sum in the left hand side of \eqref{eq:Spetersson1} absolutely converges for $\Re(r_2-r_1) > 2$.

By \eqref{eq:Hecke_sum_product_of_Lvalues}, for $\Re(r_2-r_1) > 2$, the contribution from the first term in the right hand side of  \eqref{eq:Spetersson1} is
\begin{equation}
\begin{split}\label{eq:Sterm1}
&  \sum_{m=1}^\infty  \sigma_{-r_1}(m)   m^{-R} (-1)^{d+1+\tfrac{r_1+r_2}{2}} \frac{\Gamma(k-1)}{ (4 \pi \sqrt{mn})^{2d+r_1+r_2+1} } \sum_{f \in \mathcal{F}} \frac{a_f(n)  \overline{a_f(m)} }{\langle \bar f, \bar f \rangle}    \\
  & = (-1)^{d+1+\tfrac{r_1+r_2}{2}} \frac{\Gamma(k-1)}{ (4 \pi \sqrt{n})^{2d+r_1+r_2+1}} \sum_{f \in \mathcal{F}}  \frac{L( d+r_2+1,\bar f ) L( d+r_2+r_1+1,\bar f ) }{\zeta(2 ( d+r_2+1 - \frac{k-1}{2}  ) + r_1 )} \frac{a_f(n)}{\langle \bar f, \bar f \rangle}.
\end{split} 
\end{equation}
The contribution from the second term  in the right hand side of  \eqref{eq:Spetersson1} is
\begin{equation}
 - (-1)^{d + 1 + \frac{r_1+r_2}{2}}   \sum_{m=1}^\infty \sigma_{-r_1}(m) m^{-R}  \delta_{m=n} =    (-1)^{d  + \frac{r_1+r_2}{2}}  \sigma_{-r_1}(n) n^{\frac{1}{2} (r_1-r_2-1) }.
\end{equation}
We obtain the following lemma:
\begin{lemma}\label{lem:side1}
Let $\Re(r_2)>\Re(r_1)+2$, 
 and let $ \mathcal{F}$ be a basis of normalized Hecke eigenforms of weight  $k = 2d+r_1+r_2+2$ is an even integer with $k \ge 4$, then for $n \in \mathbb{N}$,
\begin{align}\mathcalSsumdef&=  \sum_{m=1}^\infty \sigma_{-r_1}(m) m^{-\frac{1}{2} (r_1-r_2-1)}  \sum_{\ell=1}^\infty \frac{2 \pi }{\ell} S(-m,-n; \ell)  J_{2d+r_1+r_2+1} ( \tfrac{4 \pi \sqrt{mn}}{\ell})\nonumber\\ & =   \ (-1)^{d+1+\tfrac{r_1+r_2}{2}} \frac{\Gamma(k-1)}{ (4 \pi \sqrt{n})^{2d+r_1+r_2+1}} \sum_{f \in \mathcal{F}}  \frac{L( d+r_2+1,\bar f ) L( d+r_2+r_1+1,\bar f ) }{\zeta(2 ( d+r_2+1 - \frac{k-1}{2}  ) + r_1 )} a_f(n) / \langle \bar f, \bar f \rangle  \nonumber \\ & \ \ \ \ \ \ \  +   (-1)^{d  + \frac{r_1+r_2}{2}}  \sigma_{-r_1}(n) n^{\frac{1}{2} (r_1-r_2-1) } .\end{align}    
\end{lemma}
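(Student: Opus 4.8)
The plan is to evaluate $\mathcalSsumdef$ by inserting the Petersson trace formula (Lemma~\ref{lem:petersson_trace_formula}) into the inner $\ell$-sum of \eqref{eq:integral_after_contour_changedfsdd4}, taken with $\kappa(m)=\sigma_{-r_1}(m)$ and $R$ as in \eqref{eq:definition_R}, so that the Bessel order is $k-1=2d+r_1+r_2+1$. Applied termwise, this replaces the Kloosterman--Bessel sum by two contributions: a \emph{spectral term} proportional to $(-1)^{k/2}\frac{\Gamma(k-1)}{(4\pi\sqrt{mn})^{k-1}}\sum_{f\in\mathcal{F}_k} a_f(n)\overline{a_f(m)}/\langle \bar f,\bar f\rangle$, and a \emph{diagonal term} $-(-1)^{k/2}\delta_{m=n}$. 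Here $k/2=d+1+\tfrac{r_1+r_2}{2}$, so $(-1)^{k/2}=(-1)^{d+1+(r_1+r_2)/2}$, which will produce the signs in the statement. Summing over $m$ then reduces the lemma to recognizing the two resulting Dirichlet series.

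Before exchanging the order of summation over $m$, $\ell$, and $f$, I would first establish absolute convergence of the original triple sum \eqref{eq:sum_conv_defined}; this is the step I expect to be the main obstacle, since every later manipulation is licensed by it. Combining the divisor estimate \eqref{eq:divisor_estimate}, the Bessel bound \eqref{eq:J_ineq}, and the trivial Kloosterman bound $|S(-m,-n;\ell)|\le\ell$, each summand is $\ll \ell^{-\Re(r_2-r_1)+1+\varepsilon}\,m^{-1-\varepsilon/2}$, which is summable in $\ell$ and $m$ precisely when $\Re(r_2-r_1)>2$. This is exactly the hypothesis of the lemma: it is what permits applying the trace formula termwise and swapping the $m$- and $f$-summations. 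Outside this range the identity is only formal and must be recovered later by meromorphic continuation (Section~\ref{sec:mero}).

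For the spectral term I would collect the powers of $m$: the total exponent is $-R-\tfrac{k-1}{2}=-(d+r_2+1)$, so the $m$-sum becomes $\sum_{m\ge 1}\overline{a_f(m)}\,\sigma_{-r_1}(m)\,m^{-(d+r_2+1)}$. Applying the Hecke Dirichlet-series identity \eqref{eq:Hecke_sum_product_of_Lvalues} to $\bar f$ (valid since $\Re(r_2-r_1)>2>3/2$) evaluates this as $L(d+r_2+1,\bar f)\,L(d+r_1+r_2+1,\bar f)/\zeta(r_2+1)$; a one-line check that $2\bigl(d+r_2+1-\tfrac{k-1}{2}\bigr)+r_1=r_2+1$ matches the $\zeta$-argument displayed in the statement. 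For the diagonal term the $m$-sum collapses to $m=n$, giving $-(-1)^{k/2}\sigma_{-r_1}(n)\,n^{-R}$ with $n^{-R}=n^{\frac12(r_1-r_2-1)}$ and overall sign $(-1)^{d+(r_1+r_2)/2}$. Assembling the two contributions yields precisely the claimed identity, so the only genuinely delicate point remains the convergence bookkeeping that fixes the admissible range $\Re(r_2)>\Re(r_1)+2$.
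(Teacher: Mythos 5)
Your proposal is correct and follows essentially the same route as the paper's proof in Section~\ref{sec:petersson}: termwise application of the Petersson trace formula, the convergence bound $\ell^{-\Re(r_2-r_1)+1+\varepsilon}m^{-1-\varepsilon/2}$ justifying the interchange of sums, the identity \eqref{eq:Hecke_sum_product_of_Lvalues} for the spectral term, and the collapse of the diagonal term to $m=n$. The exponent bookkeeping ($-R-\tfrac{k-1}{2}=-(d+r_2+1)$ and $2(d+r_2+1-\tfrac{k-1}{2})+r_1=r_2+1$) and the signs all check out.
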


\subsection{Toward Theorem \ref{thm:THEOREM-A-precursor}: Contribution from the inverse Mellin transform}\label{sec:InvMellin}

Let 
\begin{equation}
    c = - \Re r_1 - \varepsilon
\end{equation}
for some sufficiently small $\varepsilon>0$. Then from \eqref{cond_2a} and \eqref{eq:cond_1}, the constant $c$ satisfies 
\begin{equation}\label{eq:recasdaskkkk}
   \Re ( \frac{1-r_1-r_2}{2} )  < c < \Re (d) +1, 
\end{equation}
and we  substitute \eqref{eq:gamma_mellin_inverstion}  into  \eqref{eq:sum_conv_defined} 
to obtain for $\text{Re}(r_2-r_1)>2$,  
\begin{equation}\label{eq:kappa_skjdfkjsdflgjsfg}
	\begin{split}
	\mathcalSsumdef =& \sum_{m=1}^\infty \sigma_{-r_1}(m) m^{-R}  \sum_{\ell=1}^\infty \frac{2 \pi }{\ell} S(-m,-n; \ell)  J_{2d+r_1+r_2+1} ( \tfrac{4 \pi \sqrt{mn}}{\ell})\\
    =& \sum_{m=1}^\infty \sigma_{-r_1}(m) m^{-R}  \sum_{\ell=1}^\infty \frac{2 \pi }{\ell} S(-m,-n; \ell) 	\frac{1}{2 \pi i} \int_{c-i \infty}^{c+i \infty}   \frac{   \Gamma(d-s+1)  }{\Gamma (d+s+r_1+r_2+1)  }      \left( \frac{2 \pi \sqrt{mn}}{\ell} \right)^{2s+r_1+r_2-1} ds. 
	\end{split}
\end{equation}

On the right hand side of \eqref{eq:kappa_skjdfkjsdflgjsfg}, we would like to swap the order of  summation over $\ell$, summation over~$m$ and integration. In order to justify this we start by noting by \cite[5.11.9]{NIST}
 for $s = c + i \tau$, \begin{equation}\label{eq:fractions_of_gamma_slskldkjfjjfwew}
    \begin{split}
         \left| \frac{   \Gamma(d-s+1)  }{\Gamma (d+s+r_1+r_2+1)  } \right|   \sim \frac{|\tau|^{\Re d-c +1}}{|\tau|^{\Re (d+r_1+r_2)+c+1}} \sim |\tau|^{-2c- \Re r_1- \Re r_2} \stackrel{\eqref{eq:recasdaskkkk}}{\le} |\tau|^{-1-2 \varepsilon}.
    \end{split}
\end{equation}
Thus the integral in the right hand side of \eqref{eq:kappa_skjdfkjsdflgjsfg} can be evaluated as 
\begin{equation}
\begin{split}
 \int_{c-i \infty}^{c+i \infty}& \left|  \frac{   \Gamma(d-s+1)  }{\Gamma (d+s+r_1+r_2+1)  }      \left( \frac{2 \pi \sqrt{mn}}{\ell} \right)^{2s+r_1+r_2-1} \right| ds  
\lesssim\left( \frac{\sqrt{mn}}{\ell} \right)^{2 c+\Re r_1+\Re 
r_2-1}
\end{split}
\end{equation}
and the sum is absolutely convergent.

Changing the order of summation in $m, \ell$, integration and substituting the definition of the Estermann zeta function and using \eqref{eq:gammaeps}, we obtain that  \eqref{eq:kappa_skjdfkjsdflgjsfg} becomes
\begin{align}
	\mathcalSsumdef &= \frac{1}{2 \pi i} \int_{c-i \infty}^{c+i \infty}  \sum_{\ell=1}^\infty \frac{2 \pi }{\ell}   \sumk e(\tfrac{-\kestermann n}{\ell}) \frac{   \Gamma(d-s+1)  }{\Gamma (d+s+r_1+r_2+1)  }   \\
    & \ \ \ \ \ \ \ \ \ \ \ \ \ \ \ \ \ \ \ \ \ \ \ \ \ \ \ \ \ \ \ \ \ \ \ \ \ \ \times \left( \frac{2 \pi \sqrt{n}}{\ell} \right)^{2s+r_1+r_2-1} E(1-s-r_1,-\bar{\kestermann}/\ell,-r_1)  ds\nonumber \\  
    &=\frac{1}{2 \pi i}   \sum_{\ell=1}^\infty \frac{2 \pi }{\ell}   \sumk e(\tfrac{-\kestermann n}{\ell}) \int_{c-i \infty}^{c+i \infty}\mathcal{E}(s) ds.
\end{align}

We now move the contour from the vertical line $\Re(s) = c$ to a vertical line satisfying $1< \Re(s) <\text{Re}(d)+1$.
By Lemma \ref{lem:contour}, for $s=\sigma+i\tau$ with fixed $\sigma$ and all $\delta>0$, 
\begin{equation}\label{eq:asymp_Omega1}
\begin{split}
    |\mathcal{E}(s) |   \lesssim |\tau|^{{\text{Re}(r_1-r_2)+1+2\delta}},
\end{split}
\end{equation}
for $ \tau \to \pm \infty$. Thus for $-\text{Re}(r_1)-\varepsilon < \sigma < 1 + \varepsilon$,
if $\text{Re}(r_2)>\text{Re}(r_1)+1$, then $|\tau|^{\text{Re}(r_1-r_2)+1+2\delta} \to 0$ for $|\tau|\to \infty$ and
\newcommand{\Rbig}{\tau'}
\begin{equation}\label{eq:horizontal_contour_vanishes}
    \int_{-r_1-\varepsilon + \Rbig i}^{1+\varepsilon + \Rbig i} \mathcal{E}(s) \, ds  +     \int_{-r_1-\varepsilon - \Rbig i}^{1+\varepsilon - \Rbig i} \mathcal{E}(s) \, ds  \stackrel{\Rbig \to \infty}{\to} 0.
\end{equation}
Thus we can shift of the contour to the right and, using Section \ref{sec:poles_of_E_sdklfjfjfskdkdkkk3}, we pick up the residues of two poles to get
\begin{equation}\label{eq:intermediate_integral_petersson_backwards_sdfsdfsdeee3324}
	\begin{split}
	\mathcalSsumdef   
    &=\frac{1}{2 \pi i}   \sum_{\ell=1}^\infty \frac{2 \pi }{\ell}   \sumk e(\tfrac{-\kestermann n}{\ell}) \int_{\tilde c-i \infty}^{\tilde c+i \infty}\mathcal{E}(s)\, ds \\
    & +  \sum_{\ell=1}^\infty \frac{2 \pi }{\ell}    \sumk e(\tfrac{-\kestermann n}{\ell})\Big( \text{Res}_{s=0}\,\mathcal{E}(s)+ \text{Res}_{s=-r_1}\,\mathcal{E}(s)\Big)
	\end{split}
\end{equation}
for some $\tilde c \in \mathbb{R}$ satisfying $1<\tilde c <\text{Re}(d)+1$ where the residues are given in \eqref{eq:pole0} and \eqref{eq:pole-r1}.

The residue at $s=0$ gives rise to the term
\begin{equation}
\begin{split}
\sum_{\ell=1}^\infty \frac{2 \pi }{\ell}   \sumk e(\tfrac{-\kestermann n}{\ell}) \text{Res}_{s=0}\,\mathcal{E}(s)  
&=-\frac{ 2 \pi   \Gamma(d+1)  \left(2 \pi \sqrt{n} \right)^{r_1+r_2-1} }{\Gamma (d+r_1+r_2+1)  } \zeta(1-r_1) \sum_{\ell=1}^\infty     \sumk e(\tfrac{-\kestermann n}{\ell})    \ell^{-r_2-1}\\
& \stackrel{\eqref{eq:sigma0}}{=} -\frac{ 2 \pi   \Gamma(d+1)  }{\Gamma (d+r_1+r_2+1)  }  \left(2 \pi \sqrt{n} \right)^{r_1+r_2-1} \frac{\zeta(1-r_1)}{\zeta(1+r_2)}\sigma_{-r_2}(n)
\end{split}    
\end{equation}
and the residue at $s=-r_1$ yields
\begin{equation}
\begin{split}
\sum_{\ell=1}^\infty \frac{2 \pi }{\ell}  & \sumk e(\tfrac{-\kestermann n}{\ell})  \text{Res}_{s=-r_1}\,\mathcal{E}(s) \\
& = -\frac{ 2 \pi  \Gamma(d+r_1+1)  }{\Gamma (d+r_2+1)  }      \left( 2 \pi \sqrt{n}\right)^{-r_1+r_2-1}\zeta(1+r_1)\sum_{\ell=1}^\infty   \sumk e(\tfrac{-\kestermann n}{\ell})     \ell^{-r_2-1}\\ 
& \stackrel{\eqref{eq:sigma0}}{=} - \frac{ 2 \pi  \Gamma(d+r_1+1)  }{\Gamma (d+r_2+1)  }      \left( 2 \pi \sqrt{n}\right)^{-r_1+r_2-1}\frac{\zeta(1+r_1)}{\zeta(1+r_2)}\sigma_{-r_2}(n).    
\end{split}    
\end{equation}
For the remaining integral in \eqref{eq:intermediate_integral_petersson_backwards_sdfsdfsdeee3324}, we apply the functional equation to $\mathcal{E}(s)$ as in \eqref{eq:E_funeq}, the integral becomes 
\begin{equation}\label{eq:t1t2}
	\begin{split}
&\frac{1}{2 \pi i}   \sum_{\ell=1}^\infty \frac{2 \pi }{\ell}   \sumk e(\tfrac{-\kestermann n}{\ell}) \int_{\tilde c-i \infty}^{\tilde c+i \infty}\mathcal{E}(s)\, ds \\
          &   = 		  \sum_{\ell=1}^\infty   \left( \frac{2 \pi }{\ell} \right)^{r_2+1}   \sumk e(\tfrac{-\kestermann n}{\ell}) \left(  \sqrt{n}  \right)^{r_1+r_2-1} \times \frac{1}{2 \pi i}  \int_{\tilde c-i \infty}^{\tilde c+i \infty} \frac{   \Gamma(d-s+1)   \Gamma(s)  }{\Gamma (d+s+r_1+r_2+1) \Gamma(1-r_1-s)  }\\
&      \ \ \ \ \ \ \ \ \  \ \ \ \ \ \  \ \ \ \ \ \  \times       \left(  \frac{\cos(\pi s + \frac{\pi r_1}{2}) E(s;\kestermann/\ell,-r_1) + \cos(- \pi r_1/2 ) E(s;-\kestermann/\ell,-r_1)  }{\sin(\pi (r_1+s))} \right) n^{s}  ds.
    \end{split}
    \end{equation}
We split \eqref{eq:t1t2} into a sum $\mathcal{T}_1 + \mathcal{T}_2$, 
where 
\begin{equation}\label{def:t1}
\begin{split}
\mathcal{T}_1 &:=  \sum_{\ell=1}^\infty   \left( \frac{2 \pi }{\ell} \right)^{r_2+1}   \sumk e(\tfrac{-\kestermann n}{\ell}) \left(  \sqrt{n}  \right)^{r_1+r_2-1}  \\
&      \ \ \ \ \ \ \ \ \  \ \ \ \ \ \  \ \ \  \times  \frac{1}{2 \pi i}\int_{\tilde c-i \infty}^{\tilde c+i \infty} \frac{   \Gamma(d-s+1)   \Gamma(s)  }{\Gamma (d+s+r_1+r_2+1) \Gamma(1-r_1-s)  }          \frac{\cos(\pi s + \frac{\pi r_1}{2}) E(s;\kestermann/\ell,-r_1)  }{\sin(\pi (r_1+s))}  n^{s}  ds 
\end{split}    
\end{equation}
and 
\begin{equation}
\begin{split}\label{def:t2}
    \mathcal{T}_2 &:=  \sum_{\ell=1}^\infty   \left( \frac{2 \pi }{\ell} \right)^{r_2+1}   \sumk e(\tfrac{-\kestermann n}{\ell}) \left(  \sqrt{n}  \right)^{r_1+r_2-1}  \\
&      \ \ \ \ \ \ \ \ \  \ \ \ \ \ \   \ \ \  \times      \frac{1}{2 \pi i}  \int_{\tilde c-i \infty}^{\tilde c+i \infty} \frac{   \Gamma(d-s+1)   \Gamma(s)  }{\Gamma (d+s+r_1+r_2+1) \Gamma(1-r_1-s)  }  \ \frac{ \cos(- \pi r_1/2 ) E(s;-\kestermann/\ell,-r_1)  }{\sin(\pi (r_1+s))}  n^{s}  ds 
\end{split}
\end{equation}
and address each term in the following sections.

\subsubsection{$\mathcal{T}_1$}\label{sec:T1}

After we have moved the contour to the right, $E(s;\kestermann/\ell,-r_1)$ and $E(s;-\kestermann/\ell,-r_1)$ are now in the domain of absolute convergence. Thus we rewrite the Estermann zeta functions as an infinite sum over $n_1 \in \mathbb{N}_1$ using the definition
	\begin{equation}
		E(s; \kestermann/\ell, -r_1) = \sum_{n_1=1}^\infty \sigma_{-r_1}(n_1) e( \tfrac{\kestermann n_1}{\ell}) n_1^{-s}\quad  \text{and} \quad E(s; -\kestermann/\ell, -r_1) = \sum_{n_1=1}^\infty \sigma_{-r_1}(n_1) e( -\tfrac{\kestermann n_1}{\ell}) n_1^{-s},
	\end{equation}
    for $\Re(s)>1.$ 
The first term $\mathcal{T}_1$ in \eqref{def:t1} becomes
\begin{equation}\label{eq:intermediate_identity_in_petersson_backwardsasdssd2231244}
    \begin{split}
     \mathcal{T}_1=   
   &   \sum_{n_1=1}^\infty  		  \sum_{\ell=1}^\infty   \left( \frac{2 \pi }{\ell} \right)^{r_2+1}   \sumk e(\tfrac{\kestermann (n_1- n)}{\ell}) \sigma_{-r_1}(n_1) \,   n^{  \frac{ r_1+r_2-1 }{2}   } \times  \\
&   \ \ \   \ \ \ \times \frac{1}{2 \pi i}  \int_{\tilde c-i \infty}^{\tilde c+i \infty} \frac{   \Gamma(d-s+1)   \Gamma(s)  }{\Gamma (d+s+r_1+r_2+1) \Gamma(1-r_1-s)  }        \cdot   \frac{\cos(\pi s + \frac{\pi r_1}{2})   }{\sin(\pi (r_1+s))} n^{s} n_1^{-s}  ds
    \end{split}
\end{equation}
when $1<\tilde c<\Re(d)+1$. 
Using the functional equation for the Gamma function, we obtain that the integral above is equal to
\begin{equation}\label{eq:Upsilon_first_appearence}
\begin{split}
\mathcal{I}_1(n_1,n) & :=\frac{1}{2 \pi i}  \int_{\tilde c-i \infty}^{\tilde c+i \infty} \frac{   \Gamma(d-s+1)   \Gamma(s)  }{\Gamma (d+s+r_1+r_2+1) \Gamma(1-r_1-s)  }        \cdot   \frac{\cos(\pi s + \frac{\pi r_1}{2})   }{\sin(\pi (r_1+s))} n^{s} n_1^{-s}  ds \\
& = \frac{1}{\pi} \frac{1}{2 \pi i}  \int_{\tilde c-i \infty}^{\tilde c+i \infty} \frac{   \Gamma(d-s+1)   \Gamma(s) \Gamma(r_1 + s) }{\Gamma (d+s+r_1+r_2+1)  }        \cdot   \cos(\pi s + \frac{\pi r_1}{2})   n^{s} n_1^{-s}  ds.
\end{split}
\end{equation}

By Lemma \ref{lemma_gamma_function_stuflöskdflkdjlkfjlksdfjlsd}, given that $\Re(r_1) > 0$,
we get that $\mathcal{I}_1(n_1,n)$ equals 
\begin{equation}\label{eq:upsilon}
    \begin{split}
  & - \frac{1}{\pi} \cos(\pi d + \pi r_1 / 2) (\frac{n}{n_1})^{1+d} \frac{\Gamma (d+1) \Gamma (d+r_1+1)}{  \Gamma(2 d+r_1+r_2+2) }  \, _2 F_1\left(d+1,d+r_1+1;2 d+r_1+r_2+2;\frac{n}{n_1}\right) \\
& - \delta_{(n_1 \le n)}   \sin(\pi d + \pi r_1 / 2) \frac{(\tfrac{n}{n_1})^{d+1} (\tfrac{n}{n_1}-1)^{r_2} }{\Gamma (r_2+1)} \, _2F_1 \left(d+r_2+1,d+r_1+r_2+1;r_2+1;1-\frac{n}{n_1} \right).
    \end{split}
\end{equation}

When $n>n_1$, the first hypergeometric function is understood as a half-sum of hypergeometric functions above and below the split; cf. \eqref{def:2f1_positionevee}. Using \eqref{eq:ramanujan_identity}, we obtain  that \eqref{eq:intermediate_identity_in_petersson_backwardsasdssd2231244} becomes 
\begin{equation}\label{eq:sum_with_negative_divisors}
    \begin{split}
  \mathcal{T}_1 & = \sum_{n_1=1}^\infty  		  \sum_{\ell=1}^\infty     \left( \frac{2 \pi }{\ell} \right)^{r_2+1}   \sumk e(\tfrac{ \kestermann (n_1 - n) }{\ell}) \sigma_{-r_1}(n_1) \,  n^{  \frac{ r_1+r_2-1 }{2}   } \mathcal{I}_1(n_1,n) \\ 
& = \frac{(2 \pi)^{r_2+1}}{ \zeta(1+r_2) }   \sum_{n_1=1}^\infty  		   \sigma_{-r_1}(n_1) \sigma_{-r_2}(|n_1-n|)  n^{  \frac{ r_1+r_2-1 }{2}   } \mathcal{I}_1(n_1,n).      \\
    \end{split}
\end{equation}

Another boundary term corresponds to the term $n=n_1$ and, by  \eqref{eq:sigma0}, gives
\begin{equation}\label{eq:yet_anothersdfsldkjfslkdjfsdknk}
\begin{split}
 \frac{(2 \pi)^{r_2+1}}{ \zeta(1+r_2) }     		   \sigma_{-r_1}(n) \sigma_{-r_2}(0)  n^{  \frac{ r_1+r_2-1 }{2}   } \mathcal{I}_1(n,n) 
=   \frac{(2 \pi)^{r_2+1}}{ \zeta(1+r_2) }     		   \sigma_{-r_1}(n) \zeta(r_2)  n^{  \frac{ r_1+r_2-1 }{2}   } \mathcal{I}_1(n,n).
\end{split}
\end{equation}
Given that 
\begin{equation}
    \mathcal{I}_1(n,n) = -\frac{\Gamma (d+1) \Gamma (r_2) \cos \left(\pi  d+\frac{\pi  r_1}{2}\right) \Gamma (d+r_1+1)}{\pi  \Gamma (d+r_2+1) \Gamma (d+r_1+r_2+1)},
\end{equation}
this term \eqref{eq:yet_anothersdfsldkjfslkdjfsdknk} becomes 
\begin{equation}
    -\frac{\Gamma (d+1) \Gamma (r_2) \cos \left(\pi  d+\frac{\pi  r_1}{2}\right) \Gamma (d+r_1+1)}{\pi  \Gamma (d+r_2+1) \Gamma (d+r_1+r_2+1)} \frac{(2 \pi)^{r_2+1}}{ \zeta(1+r_2) }     		   \sigma_{-r_1}(n) \zeta(r_2)  n^{  \frac{ r_1+r_2-1 }{2}   }.
\end{equation}

\subsubsection{$\mathcal{T}_2$}\label{sec:T2}
Similarly to $\mathcal{T}_1$,  the second term $\mathcal{T}_2$ in \eqref{def:t2} becomes
\begin{equation}\label{eq:intermediate_identity_in_petersson_backwardsasdssd22312442}
    \begin{split}
      \mathcal{T}_2 
         &  = \sum_{n_1=1}^\infty \sum_{\ell=1}^\infty   \left( \frac{2 \pi }{\ell} \right)^{r_2+1}   \sumk \sigma_{-r_1}(n_1) e(-\tfrac{\kestermann n_1}{\ell}) e(\tfrac{-\kestermann n}{\ell}) n^{ \frac{r_1+r_2-1}{2}} \times  \\
&      \ \ \   \ \ \    \times \frac{1}{2 \pi i}  \int_{\tilde c-i \infty}^{\tilde c+i \infty} \frac{   \Gamma(d-s+1)   \Gamma(s)  }{\Gamma (d+s+r_1+r_2+1) \Gamma(1-r_1-s)  }        \cdot   \frac{ \cos(- \pi r_1/2 )  n_1^{-s}   }{\sin(\pi (r_1+s))} n^{s}  ds .
    \end{split}
\end{equation}
By Lemma \ref{lem:mellin1}, 
\begin{equation}\label{eq:Xi}
    \begin{split}
&   \mathcal{I}_2(n_1, n)   := \cos( \pi r_1/2 )  \times   \frac{1}{2 \pi i}  \int_{\tilde c-i \infty}^{\tilde c+i \infty} \frac{   \Gamma(d-s+1)   \Gamma(s)  }{\Gamma (d+s+r_1+r_2+1) \Gamma(1-r_1-s)  }      \csc(\pi (r_1+s)) n^{s} n_1^{-s}   ds  \\
 & \ \ \ \ \  = \frac{(n_1 / n)^{-d-1}}{\pi} \cos( \pi r_1/2 ) \frac{\Gamma (d+1)  \Gamma (d+r_1+1)}{ \Gamma(2 d+r_1+r_2+2) }    \, _2F _1  \left(d+1,d+r_1+1;2 d+r_1+r_2+2;-\frac{n}{n_1}\right),
    \end{split}
\end{equation}
and we have 
\begin{equation}
    \begin{split}
      \mathcal{T}_2    & =  \sum_{n_1=1}^\infty \sum_{\ell=1}^\infty   \left( \frac{2 \pi }{\ell} \right)^{r_2+1}   \sumk \sigma_{-r_1}(n_1) e(-\tfrac{\kestermann n_1}{\ell}) e(\tfrac{-k n}{\ell}) n^{ \frac{r_1+r_2-1}{2}} \mathcal{I}_2(n_1, n)\\
 &  = \sum_{n_1=1}^\infty  \sigma_{-r_1}(n_1) \mathcal{I}_2(n_1,n) n^{ \frac{r_1+r_2-1}{2}}  \sum_{\ell=1}^\infty   \left( \frac{2 \pi }{\ell} \right)^{r_2+1}   \sumk  e(-\tfrac{\kestermann (n_1+n)}{\ell})   \\
    & \stackrel{\eqref{eq:ramanujan_identity}}{=}  \frac{(2 \pi)^{r_2+1}}{  \zeta(1+r_2) } \sum_{n_1=1}^\infty  \sigma_{-r_1}(n_1) \sigma_{-r_2}(| n_1+n|) \mathcal{I}_2(n_1,n) n^{ \frac{r_1+r_2-1}{2}}. 
    \end{split}
\end{equation}

\subsection{Conclusion of proof of Theorem \ref{thm:THEOREM-A-precursor}}\label{sec:conclusion}

Setting the equations in Lemma \ref{lem:side1} and \eqref{eq:intermediate_integral_petersson_backwards_sdfsdfsdeee3324} equal and applying the formulas Sections \ref{sec:T1} and \ref{sec:T2}, we arrive to the statement of Theorem~\ref{thm:THEOREM-A-precursor}.

\subsection{Alternative formulation of the proof of Theorem \ref{thm:THEOREM-A-precursor}}\label{sec:sketches}
As mentioned above, the proof of Theorem~\ref{thm:THEOREM-A-precursor} given above addresses sums $\mathcalSsumdef$ as in \eqref{eq:integral_after_contour_changedfsdd4}  with $\kappa(m) =\sigma_{-r_1}(m)$ in two different ways. On one hand,  we use the Petersson trace formula to derive the contribution of the Hecke eigenform and on the other hand we use a Mellin inversion identity for the $J$-Bessel function to derive the convolution sum. However, once one identifies the function $\mathcal{Q}$ desired,  there is a more constructive version of this proof outlined as follows.

Following Motohashi \cite{motohashi1994binary}, we rewrite the sum 
\begin{equation}\label{eq:Qsum}\sum_{n_1 \in \mathbb{Z} \setminus \{ 0, n \} }  \mathcal{Q}(n_1,n-n_1)  \sigma_{-r_1}(n_1) \sigma_{-r_2}(n-n_1) \end{equation}
using Ramanujan's identity  
$\displaystyle
		\sigma_{-r_2}( | x | )   = 
        \zeta(1+r_2)   \sum_{\ell = 1}^\infty \ell^{-r_2 - 1} c_\ell (x)$
for  $\text{Re}(r_2) >  0$  and  $x \in \mathbb{Z} \setminus \{0 \}$  	
as in \cite[(2)]{murty2013ramanujan}. One of the four ``boundary terms" (from $Z^{(\alpha,\beta)}_d$ as in \eqref{def:Zabd}) comes from when $n_1=n$ and so \eqref{eq:Qsum} can be written 
\begin{equation}\label{eq:addbound}
\sum_{n_1=1}^\infty \sigma_{-r_1}(n_1) \left[ e(\tfrac{v n_1}{\ell}) \Psi(n_1)  + e(-\tfrac{v n_1}{\ell}) \widetilde\Psi(n_1) \right] - \sigma_{-r_1}(n) e(\tfrac{vn }{\ell})  \Psi(n)
\end{equation}
for some function $\Psi$ where  $\widetilde\Psi(y) = \Psi(-y) $. We then rewrite $\Psi$ using Mellin inversion with  $1 < c_1 < \text{Re}(d)+ 1$ so that the first two terms of \eqref{eq:addbound} become 
\begin{equation}
 \frac{1}{2 \pi i } \int_{c_1-i \infty}^{c_1+i\infty}  \mathcal{M}  \Psi(s) E(s,v/\ell,-r_1)\, ds 
            + \frac{1}{2 \pi i }	\int_{c_1-i \infty}^{c_1+i\infty} \mathcal{M} \widetilde\Psi(s) E(s,-v/\ell,-r_1)\, ds ,
\end{equation}
where $E(s,v/\ell,-r_1)$ is the Estermann zeta function. Next one shifts the contour of integration just to the left of $-\Re(r_1)$.

Two of the three remaining boundary terms appear as residues for poles at $s=0$ and $-r_1$. The remaining integral can be written as 
\begin{equation}\label{eq:integral_after_contour_change}
           \begin{split}
    \frac{  (-1)^{r_1/2+r_2/2}  }  {2}{d+r_2 \choose d} &\zeta'(-r_2) n^{\frac{1}{2}(r_1+r_2+1)}  \sum_{\ell=1}^\infty \sum_{m=1}^\infty \frac{1}{\ell} S(-m,-n; \ell)    \sigma_{r_1}(m) m^{-\frac{1}{2} (r_1+r_2+1)}   \\
           & \times \frac{2\pi}{2 \pi i } \left( \int_{c_2-i \infty}^{c_2+i \infty}   \frac{\Gamma(1+d-s)}{\Gamma(r_1+r_2+d+s+1)}  \left( \frac{2 \pi \sqrt{mn}}{\ell} \right)^{2s+r_1+r_2-1} ds   \right) ,
\end{split}
\end{equation}
where $c_2=-\text{Re}(r_1)-\epsilon$. The integral in $c_2$  gives rise to the $J$-Bessel function $J_{2d+r_1+r_2+1} ( \tfrac{4 \pi \sqrt{mn}}{\ell})$. It remains to apply the Petersson trace formula so that the last boundary term arises when $m=n$. We now note that a different choice of $\mathcal{Q}$ will not result in the appropriate $J$-Bessel function and other terms will arise form the Kuznetsov trace formula.

\subsection{Meromorphic continuation (proof of Theorem \ref{thm:THEOREM-A})}\label{sec:mero}

In this section, we derive Theorem~\ref{thm:THEOREM-A} from Theorem~\ref{thm:THEOREM-A-precursor}. 
The sketch of the proof is as follows. 
We first show with the help of Morera's theorem that the left-hand side of the identity~\eqref{eq:first_main_thm_statement_sdfsdffff}   defines an analytic function in the variables $r_1$, $r_2$, $d$ within the domain defined by the  condition~\eqref{eq:cond_1}. Then we show that the  right-hand side of~\eqref{eq:first_main_thm_statement_sdfsdffff} defines a meromorphic function in all three variables in the complex plane. 
After that, the analytic continuation will imply that the identity~\eqref{eq:first_main_thm_statement_sdfsdffff} extends to a larger domain than initially established in Theorem~\ref{thm:THEOREM-A-precursor} -- more precisely, to parameters satisfying~\eqref{eq:cond_1} subject to the restriction that $k = 2d+2+r_1+r_2 \in 2 \mathbb{N} + 4$. 

To proceed with the proof, we first show that the left side of \eqref{eq:first_main_thm_statement_sdfsdffff} converges absolutely and uniformly for $r_1, r_2, d$ belonging to compact subsets $K$ of $\mathbb{R}^3$ avoiding non-positive integers $-\mathbb{N}_0$ in the third coordinate.
For small values of $|n/n_1|$, the hypergeometric function $\, _2 F_1 (d+1,d+r_1+1;k;\frac{n}{n_1})$ is defined using the Gauss series \cite[15.2.1]{NIST}. Moreover, for $(a,b,c) \in K$, 
\begin{equation}\label{eq:ssoksdosdodofkfkfkfkslölsls}
\left| \frac{(a)_m (b)_m}{(c)_m m!} \right|   \le_K \left| \frac{\Gamma(a+m)}{\Gamma(c+m)}  \frac{\Gamma(b+m)}{\Gamma(m)} \right|.
\end{equation}
We want to uniformly\footnote{Uniformly in parameters $a, b, c$ belonging to a compact set.} bound the right hand side of \eqref{eq:ssoksdosdodofkfkfkfkslölsls} as a function of $m$. Recall that $c$ belongs to a compact set. Then, by the recurrence relation of the Gamma function,  there is a constant $M_1 \in \mathbb{R}$ and $c_1 \in \mathbb{C}$ with $\Re c_1 \gg 0$ such that $|\Gamma(c+m)| \ge |\Gamma(c_1+m)| m^{M_1} $.  Since the imaginary part of $c_1$ (and hence, $c_1+m$) is uniformly bounded on compacts, by \cite[5.6.7]{NIST}, $|\Gamma(c_1+m)| \ge_K |\Gamma(\Re c_1+m)| $. Similarly, by the recurrence relation, there is a constant $M_2 \in \mathbb{R}$ and $a_1 \in \mathbb{C}$ with $1 < \Re(a_1) + 1 < \Re(c_1)$ such that $|\Gamma(a+m) | \le_K m^{M_2} |\Gamma(a_1+m)|$. Note that by \cite[5.6.6]{NIST}, $|\Gamma(a_1+m)| \le_K |\Gamma(\Re a_1+m)| $. Hence, by \cite[5.6.8]{NIST}, there is a constant  $M_3$ such that 
\begin{equation}
    \left|  \frac{\Gamma(a+m)}{\Gamma(c+m)} \right| \le_K   m^{M_1+M_2}  \left|  \frac{\Gamma(\Re a_1+m)}{\Gamma(\Re c_1+m)} \right| \le_K m^{M_3}.
\end{equation}
We evaluate $\left| \frac{\Gamma(b+m)}{\Gamma(m)} \right| $ in a similar way. As a consequence, there exists a constant $M$ such that for $|z| \le_K 1$, the function 
\begin{equation}
|\, _2F_1(a, b; c; z) \, | = \left|\sum_{m=0}^{\infty} \frac{(a)_m (b)_m}{(c)_m m!} z^m \right| \le_K \sum_{n=0}^{\infty} m^M z^m,    
\end{equation}
which is bounded. 
Thus, for fixed $n$ and large $|n_1|$, we have that $  \, _2 F_1 (d+1,d+r_1+1;k;\frac{n}{n_1})$ is uniformly bounded as a function of $n_1$ for $r_1, r_2, d$ belonging to compact subsets.
This implies 
\begin{equation}
    |\mathcal{Q}(n_1,n_2)| \le_K |n_1|^{-\Re(d)-1}.
\end{equation}  
For any $\varepsilon>0$, we have 
\begin{equation}
|    \sigma_{-r_1}(n_1) | \le |n_1|^{\varepsilon + \max ( - \Re(r_1), 0 )}
\end{equation}
and every term on the left side of \eqref{eq:first_main_thm_statement_sdfsdffff}  can be evaluated by 
\begin{equation}\label{eq:est_on_Q}
    |\mathcal{Q}(n_1,n_2)  \sigma_{-r_1}(n_1) \sigma_{-r_2}(n_2)| \le_K |n_1|^{-\Re(d)-1} |n_1|^{ \varepsilon + \max (- \Re (r_1), 0) } |n-n_1|^{ \varepsilon + \max (- \Re (r_2), 0) } .
\end{equation}
Thus the left side of \eqref{eq:first_main_thm_statement_sdfsdffff} is an absolutely convergent sum as long as 

\begin{equation}
    \Re (d)  + \min ( \Re (r_1), 0) + \min ( \Re (r_1), 0) > 0.
\end{equation}
For example, it would be sufficient to assume 
\begin{equation}\label{eq:domain_of_interesedfsdffff}
    \Re (d) > 0, \quad  \Re (r_1),  \Re (r_2) \ge 0
\end{equation}
which is exactly the second and the third condition in \eqref{eq:cond_1}.

Moreover, by \cite[15.2(ii)]{NIST}, the principal branch of $\frac{1}{\Gamma(c)} \cdot \, _2 F_1(a, b; c; z)$ is an entire function of $a$, $b$, and~$c$. The same is true for other branches excluding $z = 0$, $1$, and $\infty$. Thus, for fixed $n_1$ with $n_1 \neq 0$ and  $n_1 \neq n$, the function $\mathcal{Q}(n_1, n-n_1)$ is entire as a function of $r_1, r_2, d$. The only subtlety is that instead of $\, _2 F_1 (\cdot, \cdot , \cdot ; z)$, we consider $ \tfrac{1}{2} (\, _2 F_1 (\cdot, \cdot , \cdot ; z+i 0)+\, _2 F_1 (\cdot, \cdot , \cdot ; z-i0))$, which we treat as a sum of two hypergeometric functions -- one on the principal branch, and another one on the next branch.

We still need to prove that any point $(r_1, r_2, d) \in \mathbb{C}^3$ with components satisfying  \eqref{cond_2a} can be connected to the domain of the original convergence by a continuous path $ \{\widetilde  r_1(t),   \widetilde r_2(t), \widetilde  d(t) \}$ with $t \in [0,1]$ such that:
\begin{enumerate}[a)]
    \item for small $1-t>0$, the coordinates $ \widetilde  r_1(t),  \widetilde  r_2 (t),  \widetilde  d(t)$ satisfy \eqref{cond_2a}, and thus for $t$ near $1$, the coordinates satisfy the hypothesis of Theorem~\ref{thm:THEOREM-A-precursor},
    \item $ \widetilde  r_1(0)=r_1$, $ \widetilde r_2(0)=r_2$, and $ \widetilde d(0)=d$.
\end{enumerate}
It is not complicated to check that, e.g., the path 
\begin{equation}
\begin{split}
  \widetilde   r_1(t) =  r_1 \cdot  (1-t) + \delta\cdot t, \quad 
   \widetilde  r_2(t) = r_2 \cdot (1-t) + (k - 2 + \delta) \cdot t , \quad  
    \widetilde  d(t) = d \cdot (1-t) +  ( \tfrac{1}{4} - \delta) \cdot t
\end{split}    
\end{equation}
for some small $\delta>0$ satisfies the mentioned properties. Meromorphically continuating\footnote{We also note that the meromorphic continuation does not depend on a path we chose due to the condition \eqref{cond_2a}.} along the path $\{\widetilde  r_1(t),   \widetilde r_2(t), \widetilde  d(t) \}$ starting with $t=1$ and ending at $t=0$ concludes the proof. 
\section{Regularization of certain divergent sums}\label{sec:regularization_of_divergent_sums}
The goal of this section is to find regularization of certain divergent series.  For example, we would like to regularize series of the form
\begin{equation}
        \sum_{ \stackrel{n_1, n_2 \in \mathbb{Z} \setminus \{0, n\}  }{n_1+n_2=n}}  P(n_1, n_2) \sigma_{-r_1}(n_1) \sigma_{-r_2}(n_2),
\end{equation}
where $P$ is a certain polynomial in $n_1$ and $n_2$.  The main motivation for addressing such sums is to account for the parameters $r_1, r_2$ and $d$ which appear in the theory of modular graph functions (see, e.g., \cite{DGW, DKS2021_2}) but which are not accounted for in the statement of Theorem~\ref{thm:THEOREM-A}. 

To regularize a divergent series of the type
\begin{equation}\label{eq:sum_to_be_regularized}
        \sum_{ \stackrel{n_1, n_2 \in \mathbb{Z} \setminus \{0, n\}  }{n_1+n_2=n}} f(n_1, n_2),
\end{equation}
we  introduce a deformation parameter $\varepsilon \in \mathbb{C}$ and a family of functions $f(\varepsilon; n_1, n_2)$ such that 
\begin{enumerate}[(i)]
 \item $f(\varepsilon; n_1, n_2)$ are meromorphic as functions of~$\varepsilon \in \mathbb{C}$, 
\item 
$
  f(\varepsilon; n_1, n_2) \to  f(n_1, n_2)
$
as $\varepsilon \to 0$,
and 
\item for $\varepsilon \in \mathbb{C}$ satisfying $\Re(\varepsilon) > 1$,  the sum 
\begin{equation}\label{eq:sum_F_sssllkfkkkwppoooe}
F(\varepsilon) =        \sum_{ \stackrel{n_1, n_2 \in \mathbb{Z} \setminus \{0, n\}  }{n_1+n_2=n}}  f(\varepsilon; n_1, n_2)
\end{equation}
is absolutely and uniformly convergent.
\end{enumerate}
Then, if it is possible to find an analytic continuation of $F(\varepsilon)$ as a function of $\varepsilon$ to a neighborhood of 0, we say that $F(0)$ \textit{regularizes} the sum \eqref{eq:sum_to_be_regularized}. 

Our next goal is to define a function $f(\varepsilon; n_1, n_2)$. 
Let $r_1,r_2 > 0$, $d < 0$ such that $r_1 + d > 0$  and $2d+r_1+r_2+2 = k \in 2 \mathbb{N}_0 + 6$, and define 
\begin{equation}\label{eq:paths_of_r1_and_r2-take2}
\begin{split}
  \hat   r_1(\varepsilon) &= r_1+\varepsilon, \\
\hat     r_2(\varepsilon) &= r_2+\varepsilon, \\
\hat    d(\varepsilon)   &= d-\varepsilon.\\
\end{split}
\end{equation}
Note that $  2 d(\varepsilon) +   r_1(\varepsilon)+  r_2(\varepsilon)+2 =  2d+r_1+r_2+2 = k.$
Without loss of generality assume $\Re(r_1)\le \Re(r_2)$, then let  $\varepsilon_0 = -r_1 + \delta$ and choose $\delta > 0$ so that
\begin{equation}\label{eq:paths_of_r1_and_r2_1}\begin{split}
    \Re( \hat r_1(\varepsilon_0)) &= \Re(\delta) > 0, \\
    \Re( \hat r_2(\varepsilon_0)) &= \Re( r_2 -r_1 +  \delta) > 0, \\
    \Re(\hat d(\varepsilon_0))   &= \Re(d+r_1 - \delta) > 0.
\end{split}\end{equation}
Let the function $f(\varepsilon; n_1, n_2)$ be defined as 
\begin{equation}\label{eq:f_eps_llalallll}
    f(\varepsilon; n_1, n_2) = \Gamma(d(\varepsilon)+1) \cdot  \sigma_{- \hat r_1 (\varepsilon) }(n_1) \sigma_{- \hat r_2 (\varepsilon)}(n_2) \mathcal{Q}^{(\hat r_1(\varepsilon), \hat r_2(\varepsilon) )}_{\hat d(\varepsilon)} (n_1,n_2),
\end{equation}
for $\mathcal{Q}$ as in \eqref{eq:Q_convolution_form_main_theorem}. 
The condition \eqref{eq:paths_of_r1_and_r2_1} guarantees that for $\varepsilon$ in a neighborhood of $\varepsilon_0$, the sum  \eqref{eq:sum_F_sssllkfkkkwppoooe} recovers (up to a factor) an absolutely convergent series from Theorem~\ref{thm:THEOREM-A}. In order to study the regularization, we connect $\varepsilon_0$ and $0$ by a path in the complex plane and perform a meromorphic continuation along such a path. 

Before doing so, we explain the choice to multiply our identities by $\Gamma(d(\varepsilon)+1)$. To account for parameters appearing in the theory of modular graph functions, we need to consider $d \in - \mathbb{N}$ and odd $r_1$ and $r_2$. Recall that for $n \in \mathbb{N}_0$,  we have \cite[15.9.1]{NIST}
\begin{equation}\label{eq:2F1_vs_Jacobi_functions}
{ }_2 F_1(-n, \alpha+1+\beta+n ; \alpha+1 ; x)=\frac{n! \Gamma(\alpha+1)}{ \Gamma(\alpha+n+1)} P_n^{(\alpha, \beta)}(1-2 x),
\end{equation}
where $P_n^{(\alpha, \beta)}$ denotes the Jacobi polynomial, 
and thus for $d \in -\mathbb{N}$, we get
\begin{equation} \label{equation_2f1_regularized_asdasdllkkkk}
G_+(n_1,n_2) = G_-(n_1,n_2) = \, _2F _1 (d+1,d+r_1+1;k;\tfrac{n}{n_1}) = \frac{(-d-1)! \Gamma(k) }{ \Gamma(k-d-1) } P^{  (k-1, -r_2) }_{-d-1} (1-2 \tfrac{n}{n_1}).
\end{equation}
Note that if we were to substitute $d \in - \mathbb{N}$  into \eqref{eq:Q_convolution_form_main_theorem} directly, the first term ${\left(  G_+  - G_-  \right) \cdot  i^{k+1} \sin( \pi r_2 / 2)}$ would vanish by \eqref{equation_2f1_regularized_asdasdllkkkk}, as $G_+(n_1, n_2)=G_-(n_1, n_2)$. Additionally, the term involving $\cos(\pi r_2 / 2)$ and $\cos(\pi r_1 / 2)$ would vanish as well since both $r_1$ and $r_2$ are odd. 
Thus when  $d \in -\mathbb{N}$, it makes sense to rather consider the convolution sum multiplied by $\Gamma(d(\varepsilon)+1)$.

We will impose further assumptions on the values of $d$ to avoid the poles of $L$-functions, as further seen in Section \ref{sec:regL-val}. In what follows, we consider how different terms in 
Theorem~\ref{thm:THEOREM-A} behave under the limit $\varepsilon \to 0$. 

\begin{remark}
Interestingly, the approach above resembles the regularization techniques used in the evaluation of Mellin–Barnes-type integrals, particularly in the context of Feynman diagram calculations (cf. \cite{smirnov2009resolution}):
\[
\frac{1}{(2\pi i)^n} \int_{-i\infty}^{+i\infty} \cdots \int_{-i\infty}^{+i\infty} 
\frac{ \prod_i \Gamma\left( a_i + b_i \varepsilon + \sum_j c_{ij} z_j \right) }
     { \prod_i \Gamma\left( a'_i + b'_i \varepsilon + \sum_j c'_{ij} z_j \right) }
\prod_k x_k^{d_k} \prod_{l=1}^n \mathrm{d}z_l,
\]
where the regularization of a single integral is obtained via the analytic continuation in the  deformation parameter, cf. Lemma \ref{lemma_gamma_function_stuflöskdflkdjlkfjlksdfjlsd}. In our case, however, the divergence manifests on the level of the whole sum.
\end{remark}

\subsection{Contribution from $Z^{\alpha, \beta}_d$}
In this section, we study how $ \Gamma(1+\hat d(\varepsilon)) Z^{(\hat r_1(\varepsilon),\hat r_2(\varepsilon))}_{\hat d(\varepsilon)}$ behaves under the degeneration. 
\begin{lemma} For $r_1,r_2>1$ odd integers and $d<0 $ an integer, 
\begin{align}
\lim_{\varepsilon\to 0}&\left(\Gamma(1+\hat d(\varepsilon)) \cdot Z^{(\hat r_1(\varepsilon),\hat r_2(\varepsilon))}_{\hat d(\varepsilon)}\right)\nonumber\\ &= 
  \frac{(-1)^{d+1} (2 \pi )^{r_2} }{\Gamma (1+d+r_2) \Gamma (1+d+r_1+r_2 ) \Gamma(-d) } \zeta'(1-r_2) + \frac{(2 \pi )^{-r_2} \zeta (1+r_2 )}{\Gamma (1+d+r_1 )} n^{-r_2}.
\end{align}
\end{lemma}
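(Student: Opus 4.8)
The plan is to substitute the explicit paths $\hat r_1(\varepsilon)=r_1+\varepsilon$, $\hat r_2(\varepsilon)=r_2+\varepsilon$, $\hat d(\varepsilon)=d-\varepsilon$ from \eqref{eq:paths_of_r1_and_r2-take2} directly into the definition \eqref{def:Zabd} of ${Z}^{(\alpha,\beta)}_d$, multiply by $\Gamma(1+\hat d(\varepsilon))=\Gamma(1+d-\varepsilon)$, and then read off the $\varepsilon\to0$ limit term by term. The first book-keeping step is to record which arguments are actually $\varepsilon$-independent: one checks $\hat d(\varepsilon)+\hat r_2(\varepsilon)+1=d+r_2+1$ and $\hat d(\varepsilon)+\hat r_1(\varepsilon)+1=d+r_1+1$ carry no $\varepsilon$, whereas $\hat r_1(\varepsilon)+\hat r_2(\varepsilon)+\hat d(\varepsilon)+1=r_1+r_2+d+1+\varepsilon$ and $1\mp\hat r_2(\varepsilon)=1\mp r_2\mp\varepsilon$ each carry a single $\varepsilon$. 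This makes the degeneration transparent.

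I would dispose of the second summand of ${Z}$ first, since it is elementary. After multiplying by $\Gamma(1+\hat d(\varepsilon))=\Gamma(d-\varepsilon+1)$, this prefactor cancels identically against the factor $\Gamma(\hat d(\varepsilon)+1)$ appearing in the denominator of that summand, leaving $\frac{\zeta(1+r_2+\varepsilon)}{(2\pi)^{r_2+\varepsilon}\,\Gamma(d+r_1+1)}\,n^{-r_2-\varepsilon}$. Taking $\varepsilon\to0$ gives the advertised second term $\frac{(2\pi)^{-r_2}\zeta(1+r_2)}{\Gamma(1+d+r_1)}\,n^{-r_2}$ by continuity, with nothing delicate occurring.

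The substantive part is the first summand, which presents a genuine $0\cdot\infty$ indeterminacy. Here the two facts to isolate are: since $d\le-1$ is an integer, $1+d$ is a non-positive integer, so $\Gamma(1+d-\varepsilon)$ has a simple pole at $\varepsilon=0$; and since $r_2\ge3$ is odd, $1-r_2$ is a negative even integer and hence a trivial (simple) zero of $\zeta$, so $\zeta(1-r_2-\varepsilon)$ vanishes to first order there. Writing $N=-d-1\ge0$ and using $\operatorname{Res}_{z=-N}\Gamma(z)=(-1)^N/N!$, one has $\Gamma(1+d-\varepsilon)=-\frac{(-1)^N}{N!\,\varepsilon}+O(1)$, while $\zeta(1-r_2-\varepsilon)=-\varepsilon\,\zeta'(1-r_2)+O(\varepsilon^2)$. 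Multiplying, the pole and the zero cancel and the product tends to $\frac{(-1)^N}{N!}\,\zeta'(1-r_2)$. Recognizing $N!=(-d-1)!=\Gamma(-d)$ and $(-1)^N=(-1)^{-d-1}=(-1)^{d+1}$, and folding in the remaining continuous factors $(2\pi)^{r_2}/\bigl[\Gamma(1+d+r_1+r_2)\Gamma(1+d+r_2)\bigr]$ evaluated at $\varepsilon=0$, reproduces exactly the first term $\frac{(-1)^{d+1}(2\pi)^{r_2}}{\Gamma(1+d+r_2)\Gamma(1+d+r_1+r_2)\Gamma(-d)}\,\zeta'(1-r_2)$.

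The main obstacle is simply the careful matching of the simple pole of $\Gamma$ against the simple zero of $\zeta$: getting the sign right (the two minus signs from $z+N=-\varepsilon$ and from the first-order Taylor coefficient $-\zeta'(1-r_2)$ combine to a plus) and correctly identifying the factorial with $\Gamma(-d)$ via the residue formula. A secondary point to verify is that in the range relevant to Theorem~\ref{thmB}, namely $-r_2+1\le d\le-2$, the denominator factors $\Gamma(1+d+r_2)$ and $\Gamma(1+d+r_1+r_2)$ have arguments $\ge2$ and $\ge2+r_1$ respectively, hence are finite and nonzero, so no further cancellation intrudes and the two-term limit is exactly as stated.
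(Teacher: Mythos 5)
Your proposal is correct and follows essentially the same route as the paper: the second summand is handled by direct cancellation of $\Gamma(1+\hat d(\varepsilon))$ and continuity, while the first summand is resolved by matching the simple pole of $\Gamma(1+d-\varepsilon)$ (residue $(-1)^{d+1}/\Gamma(-d)$) against the simple trivial zero of $\zeta(1-r_2-\varepsilon)$, yielding $\frac{(-1)^{d+1}}{\Gamma(-d)}\zeta'(1-r_2)$. Your sign bookkeeping and the identification $(-d-1)!=\Gamma(-d)$ agree with the paper's computation.
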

\begin{proof}
Replacing \(\alpha, \beta, d\) by \(\hat r_1(\varepsilon), \hat r_2(\varepsilon),\hat d(\varepsilon)\) respectively in $Z^{(\alpha,\beta)}_d$ as defined in \eqref{def:Zabd}, we find that its first term multiplied by \(\Gamma(\hat d(1+\varepsilon))\) becomes 
\begin{equation}
\frac{(2 \pi )^{r_2+\varepsilon} }{\Gamma (1+d+r_2) \Gamma (1+d+r_1+r_2+\varepsilon )} \zeta (1-r_2-\varepsilon ) \Gamma \left(1+d -\varepsilon\right).
\end{equation}
Note that 
\begin{equation}
  \frac{(2 \pi )^{r_2+\varepsilon} }{\Gamma (1+d+r_2) \Gamma (1+d+r_1+r_2+\varepsilon )} 
  \stackrel{\varepsilon \to 0}{\to}
  \frac{(2 \pi )^{r_2} }{\Gamma (1+d+r_2) \Gamma (1+d+r_1+r_2 )} ,
\end{equation}
and the value on the right is finite since the reciprocal of the gamma function is entire. 
For an odd integer $r_2>1$, the function $   \zeta (1-r_2-\varepsilon)$ has a simple zero at $\varepsilon = 0$, and $\Gamma \left(1+d -\varepsilon\right)$ has a simple pole. Recall that for $n\in\mathbb{Z}_{\leq 0}$, one has  $\text{Res}_{s=n}\Gamma(s) = (-1)^n/(-n)! = (-1)^n / \Gamma(1-n)$. Hence, for $r_1,r_2>1$ odd integers and $d<0 $ an integer,
\begin{equation}
   \zeta (1-r_2-\varepsilon ) \Gamma \left(1+d -\varepsilon\right)
    \stackrel{ \varepsilon \to 0  }{\to} 
    \frac{  (-1)^{d+1}  }{    \Gamma(-d)}    \zeta'(1-r_2).
\end{equation}
On the other hand, the second term in \eqref{def:Zabd} multiplied by $\Gamma(d(\varepsilon)+1)$ becomes 
\begin{equation}
\frac{(2 \pi )^{-r_2-\varepsilon } \zeta (1+r_2+\varepsilon )}{\Gamma (1+d+r_1+\varepsilon )}  n^{-r_2-\varepsilon} \stackrel{  \varepsilon \to 0   }{\to} \frac{(2 \pi )^{-r_2} \zeta (1+r_2 )}{\Gamma (1+d+r_1 )} n^{-r_2}.
\end{equation}
\end{proof}

\subsection{Contribution from $L$-values}\label{sec:regL-val}
We consider the contribution from $L$-values to the convolution identities \eqref{eq:first_main_thm_statement_sdfsdffff} and multiply by $\Gamma(\hat d(\varepsilon)+1)$ with \eqref{eq:paths_of_r1_and_r2-take2} substituted. We can further let  $\varepsilon \to 0$  
and obtain that the meromorphic continuation of $L$-function contribution is equal to:
\begin{equation}
-\frac{i^k 2^{2 - 2 k - r_2} \pi ^{1-k-r_2} \Gamma (k-1) }{\Gamma \left(1+d+r_1\right)} n^{-d-r_1-r_2} \sum_{f \in \mathcal{F}_k} \frac{L\left(1 + d + r_2,f\right) L\left(1 + d + r_1 + r_2,f\right)}{\langle \bar f, \bar f \rangle }   a_f(n).
\end{equation}

  The poles of $L(s, f)$ are at $s=0$ and $s=k$. The former corresponds to $d=-r_2-1$ or $d=-1-r_1-r_2$, and the latter corresponds to $d=-1$ or $d=-r_1-1$.  In the cases considered in \cite{DKS2021_2} these parameters are avoided.

\subsection{The behavior of $\mathcal{Q}$}
To slightly simplify the notations in \eqref{eq:f_eps_llalallll}, denote 
\begin{equation}
G_{\pm}(\varepsilon) = G_{\pm}(n_1, n_2; \varepsilon) = \, _2 F_1 \left(\hat d(\varepsilon)+1,\hat d(\varepsilon)+\hat r_1(\varepsilon)+1;k;\frac{n}{n_1} \pm  i 0\right),
\end{equation} 
and define  $Q(n_1, n_2; \varepsilon)$  by (cf. \eqref{eq:Q_convolution_form_main_theorem})
    \begin{equation}\label{eq:Q_convolution_form-take2}
    \begin{split}
    \Gamma(k) \left|\frac{n_1}{n}\right|^{\hat d(\varepsilon)+1}  \mathcal{Q}(n_1,n_2; \varepsilon) = & \left(  G_+(\varepsilon)  - G_-(\varepsilon)  \right) \cdot  i^{k+1} \sin( \pi \hat r_2(\varepsilon) / 2) \\
    + & \left(  G_+(\varepsilon)  + G_-(\varepsilon) \right) \cdot \begin{cases}
       i^{k}  \cos(\pi \hat r_2(\varepsilon) / 2), \quad &n_1 > 0, \\
             \cos(\pi \hat r_1(\varepsilon) / 2), \quad &n_1 < 0. 
        \end{cases}
    \end{split}
\end{equation}

The following lemma describes the behavior of the convolution weight (multiplied by $\Gamma(\hat d(\varepsilon) +1   ) $) as $\varepsilon \to 0$: 
\begin{lemma} Let $n \in \mathbb{N}$ and let  $n_1,n_2\in\mathbb{Z}$ so that $n_1+n_2=n$. Let $r_1,r_2>1$ be odd integers, and let $d<0 $ an integer with $\Re(d+r_1) > 0$ and $\Re(d+r_2) > 0$, then 
\begin{equation}\label{eq:limitQ}
    \begin{split}
&  \Gamma(k-d-1) \cdot \lim_{\varepsilon \to 0} \left(  \Gamma(\hat d(\varepsilon) +1   )      \mathcal{Q}(n_1,n_2; \varepsilon)  \right) \\
& \ \ \ \ \ \ \ \ \ \ \ \ \ \ \ \ \ \ \ \ \ \ =    2  \pi i^{k+r_2+1}   \left( \frac{n_2}{n}  \right)^{r_2}  P_{d+r_1}^{(r_2,-r_1)}\left(2\tfrac{ n_1}{n}-1\right) \cdot \begin{cases}
    1  , & 0 < n_1, n_2 < n, \\
    0, & \text{otherwise},
\end{cases}
\\
& \ \ \ \ \ \ \ \ \ \ \  \ \ \ \ \ \ \ \ \ \ \ +      \pi i^{ r_1+1 }   \left(\frac{n}{n_1}\right)^{ d+1}   P^{  (k-1, -r_2) }_{-d-1} (1-2 \tfrac{n}{n_1}) \cdot \begin{cases}
    1  , & n_1 > 0, \\
    -1, & n_1 < 0.
\end{cases}
    \end{split}
\end{equation}

\end{lemma}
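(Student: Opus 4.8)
The plan is to substitute the decomposition \eqref{eq:Q_convolution_form-take2} and treat its two pieces separately under multiplication by $\Gamma(\hat d(\varepsilon)+1)$ and the limit $\varepsilon\to 0$: the ``discontinuity piece'' carrying $G_+(\varepsilon)-G_-(\varepsilon)$, and the ``average piece'' carrying $G_+(\varepsilon)+G_-(\varepsilon)$. The first structural observation I would record from the paths \eqref{eq:paths_of_r1_and_r2-take2} is that $\hat d(\varepsilon)+\hat r_1(\varepsilon)=d+r_1$ and $\hat d(\varepsilon)+\hat r_2(\varepsilon)=d+r_2$ are \emph{independent} of $\varepsilon$, so that only the first hypergeometric parameter $\hat d(\varepsilon)+1=d+1-\varepsilon$ moves. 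As noted before the statement, at $\varepsilon=0$ both pieces vanish — the discontinuity piece because $G_+=G_-$ once the series terminates (via \eqref{equation_2f1_regularized_asdasdllkkkk}), the average piece because $\cos(\pi r_1/2)=\cos(\pi r_2/2)=0$ for odd $r_1,r_2$ — while $\Gamma(\hat d(\varepsilon)+1)=\Gamma(d+1-\varepsilon)$ has a pole at $\varepsilon=0$ since $d+1\le 0$. Thus each contribution is a finite $0\cdot\infty$ limit, and the whole computation reduces to extracting the two leading orders.

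For the discontinuity piece I would insert the exact formula of Remark~\ref{rem:remark_on_difference_between_cuts} with the deformed parameters. Its prefactor contains $1/\Gamma(\hat d(\varepsilon)+1)$, which cancels the divergent $\Gamma(\hat d(\varepsilon)+1)$ exactly, leaving a factor that is continuous at $\varepsilon=0$; the limit is then immediate and proportional to $(\tfrac{n}{n_1}-1)^{r_2}\cdot {}_2F_1(k-d-1,d+r_2+1;r_2+1;1-\tfrac{n}{n_1})$. Converting this to $P_{d+r_1}^{(r_2,-r_1)}(2\tfrac{n_1}{n}-1)$ requires an Euler transformation followed by a Pfaff transformation to bring the argument to $n_2/n$ and the first parameter to $-(d+r_1)$, after which \eqref{eq:2F1_vs_Jacobi_functions} applies with $N=d+r_1$, $\alpha=r_2$, $\beta=-r_1$; here the hypotheses $d+r_1\in\mathbb{N}$ and $\Re(d+r_2)>0$ are used, and the Jacobi polynomial with $\beta=-r_1<-1$ is to be read through \eqref{eq:2F1_vs_Jacobi_functions} as its definition. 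The surviving Gamma factors collapse because $\Gamma(k-d-1)=\Gamma(d+r_1+r_2+1)$, while the phase collapses via $\sin(\pi r_2/2)=i^{\,r_2-1}$ (so that $i^{k+1}\sin(\pi r_2/2)=i^{\,k+r_2}$ and the ambient $2\pi i$ yields $2\pi i^{\,k+r_2+1}$). Since the discontinuity is supported only where $n/n_1>1$ lies on the cut, i.e. $0<n_1<n$, this reproduces exactly the first line of \eqref{eq:limitQ} together with its indicator.

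For the average piece I would use \eqref{equation_2f1_regularized_asdasdllkkkk} to write $G_+(\varepsilon)+G_-(\varepsilon)\to 2G$, where $G$ is the terminating hypergeometric expressed through $P_{-d-1}^{(k-1,-r_2)}(1-2\tfrac{n}{n_1})$; the convergence $G_\pm(\varepsilon)\to G$ is just continuity of ${}_2F_1$ in its first parameter as the series terminates, the cut discontinuity being $O(1/\Gamma(\hat d(\varepsilon)+1))\to 0$. The $0\cdot\infty$ limit then comes from pairing the Laurent expansion of $\Gamma(d+1-\varepsilon)$ at its simple pole (residue $(-1)^{-d-1}/\Gamma(-d)$ at $s=d+1$) against the first-order vanishing $\cos(\pi \hat r_j(\varepsilon)/2)=-\tfrac{\pi\varepsilon}{2}\sin(\pi r_j/2)+O(\varepsilon^3)$. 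I would run the sign bookkeeping separately for $n_1>0$ (the $\cos(\pi\hat r_2/2)$ branch, with $|n/n_1|^{d+1}=(n/n_1)^{d+1}$) and for $n_1<0$ (the $\cos(\pi\hat r_1/2)$ branch, with $|n/n_1|^{d+1}=(-1)^{d+1}(n/n_1)^{d+1}$), checking through $\sin(\pi r_j/2)=i^{\,r_j-1}$ and $i^{2d}=(-1)^d$ that the resulting phases reduce to $+\pi i^{\,r_1+1}$ and $-\pi i^{\,r_1+1}$ respectively. This yields precisely the second line of \eqref{eq:limitQ} with its $\pm 1$ case distinction.

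The hard part will be the middle step of the discontinuity piece: identifying the correct chain of hypergeometric transformations that sends ${}_2F_1(k-d-1,d+r_2+1;r_2+1;1-\tfrac{n}{n_1})$ to $P_{d+r_1}^{(r_2,-r_1)}$, and then pushing all accompanying powers of $n,n_1,n_2$, the $i$-phases, and the Gamma quotients through so that the two superficially different normalizations collapse to the clean coefficients $2\pi i^{\,k+r_2+1}$ and $\pi i^{\,r_1+1}$. A single misapplied transformation (Euler in place of Pfaff, or the wrong target argument $n_2/n$ versus $1-n/n_1$) alters the $\beta$-parameter of the Jacobi polynomial and destroys the identification, so the phase and Gamma bookkeeping must be carried out with care rather than by inspection.
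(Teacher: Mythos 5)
Your proposal is correct and follows essentially the same route as the paper: the same split of \eqref{eq:Q_convolution_form-take2} into the discontinuity and average pieces, the exact cancellation of $\Gamma(\hat d(\varepsilon)+1)$ against the prefactor in Remark~\ref{rem:remark_on_difference_between_cuts}, the use of \eqref{equation_2f1_regularized_asdasdllkkkk} for $G_++G_-$, and the $0\cdot\infty$ pairing of the Gamma pole with the zero of $\cos(\pi \hat r_j(\varepsilon)/2)$. The only cosmetic difference is in the middle step you flag as hard: the paper reaches $P_{d+r_1}^{(r_2,-r_1)}$ by a single application of \cite[15.8.1]{NIST} sending ${}_2F_1(k-d-1,d+r_2+1;r_2+1;1-\tfrac{n}{n_1})$ to $(\tfrac{n}{n_1})^{-d-r_2-1}{}_2F_1(-d-r_1,d+r_2+1;r_2+1;1-\tfrac{n_1}{n})$, which is equivalent to your Euler-then-Pfaff chain.
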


\begin{proof}
The proof will follow from multiplying the right hand side of \eqref{eq:Q_convolution_form-take2} by $\Gamma(\hat d(\varepsilon) +1   )$ and letting $\varepsilon \to 0$. 
We start with its first summand. Note that 
\begin{equation}\label{eq:first_term_in_convolution_reg111223123}
    i^{k+1} \sin( \pi \hat r_2(\varepsilon) / 2)  \stackrel{  \varepsilon \to 0  }{\to } i^{k+1} \sin( \pi r_2 / 2).
\end{equation}
From Remark \ref{rem:remark_on_difference_between_cuts}, 
    \begin{equation}\label{eq:limit_finite_part}
    \begin{split}
     \Gamma(\hat d(\varepsilon)+1 )   &\left(   G_+(\varepsilon)  - G_-(\varepsilon) \right)   
     =  \delta_{ n/n_1 > 1 } \cdot 2  \pi i   \frac{  \Gamma(2 \hat d(\varepsilon)+\hat r_1(\varepsilon)+\hat r_2(\varepsilon)+2)  }{\Gamma(\hat r_2(\varepsilon)+1)\Gamma (\hat d(\varepsilon)+\hat r_1(\varepsilon)+1)}   (\tfrac{n_2}{n_1})^{\hat r_2 (\varepsilon)} \\
     &  \ \ \ \ \ \ \ \ \ \ \ \ \ \ \ \ \ \ \ \ \ \ \ \ \ \ \ \times\, _2F_1( \hat d(\varepsilon)+\hat r_1(\varepsilon)+\hat r_2(\varepsilon)+1,\hat d(\varepsilon)+\hat r_2(\varepsilon)+1;\hat r_2(\varepsilon)+1;1-\tfrac{n}{n_1}) \\ 
  & \stackrel{\varepsilon \to 0}{\to}  \delta_{ n/n_1 > 1 } \cdot  2  \pi i   \frac{  \Gamma(k) }{\Gamma(r_2+1)\Gamma (d+r_1+1)}  (\tfrac{n_2}{n_1})^{r_2} \, _2F_1(k-d-1,d+r_2+1;r_2+1;1-\tfrac{n}{n_1}) .
    \end{split}
    \end{equation}
From \cite[15.8.1]{NIST}, when $-d-r_1\in\mathbb{Z}_{<0}$,
\begin{equation}\label{eq:2f1_sdfjjfjfjjjfdjd_right_kjskskskdkd_iiiifkkkk}
    \begin{split}
     \, _2F_1(k-d-1,d+r_2+1;r_2+1;1-\tfrac{n}{n_1})&  =  \left(\frac{n}{n_1}\right)^{-d-r_2-1} \, _2 F_1(  -d-r_1, d+r_2+1, r_2+1, 1-\tfrac{n_1}{n}) \\
&   \stackrel{\eqref{eq:2F1_vs_Jacobi_functions}}{=}  \left(\frac{n}{n_1}\right)^{-d-r_2-1}  \frac{(d+r_1)! }{(r_2+1)_{d+r_1}} P_{d+r_1}^{(r_2,-r_1)}\left(2\tfrac{n_1}{n}-1\right).
\end{split}
\end{equation}
Combining together \eqref{eq:first_term_in_convolution_reg111223123}, \eqref{eq:limit_finite_part} and \eqref{eq:2f1_sdfjjfjfjjjfdjd_right_kjskskskdkd_iiiifkkkk}, we obtain the first summand in the right hand side of \eqref{eq:limitQ}.

Now consider the second summand in the right hand side of \eqref{eq:Q_convolution_form-take2}. The definition of $\hat r_1(\varepsilon), \hat r_2(\varepsilon)$ and $\hat d(\varepsilon)$ implies  
\begin{equation}\label{eq:gammacosstuff}
    \Gamma (\hat d(\varepsilon)+1) \cos \left(\frac{\pi  \hat r_1(\varepsilon) }{2}\right) = \cos \left(\frac{r_1+\varepsilon}{2} \pi  \right) \Gamma \left(d-\varepsilon +1\right).
\end{equation}
For $r_1,r_2>1$ odd integers and $d<0 $ an integer, we have 
\begin{equation}
    \cos \left(\frac{r_1+\varepsilon}{2} \pi  \right) \Gamma \left(d-\varepsilon +1\right) \stackrel{ \varepsilon \to 0 }{\to}  \frac{ (-1)^{\frac{r_1+1}{2}+d} \pi   }{2\Gamma(-d)},
\end{equation}
hence 
\begin{equation}\label{eq:gammacosstuffdf}
\begin{split}
    i^k \Gamma (\hat d(\varepsilon)+1) \cos \left(\frac{\pi  \hat r_2(\varepsilon) }{2}\right) & \stackrel{ \varepsilon \to 0 }{\to}   \frac{ (-1)^{\frac{k+r_2+1}{2}+d} \pi   }{2\Gamma(-d)},\\
    \Gamma (\hat d(\varepsilon)+1) \cos \left(\frac{\pi  \hat r_1(\varepsilon) }{2}\right) & \stackrel{ \varepsilon \to 0 }{\to}   \frac{ (-1)^{\frac{r_1+1}{2}+d} \pi   }{2\Gamma(-d)}.
\end{split}
\end{equation}
Moreover, by \eqref{equation_2f1_regularized_asdasdllkkkk}, we have 
\begin{equation}\label{G+andG-combinedtogetherwithepsilon}
    G_+(\varepsilon) + G_- (\varepsilon)\stackrel{\varepsilon \to 0}{\to} 2 \frac{\Gamma(-d) \Gamma(k)}{ \Gamma(k-d-1) } P^{  (k-1, -r_2) }_{-d-1} (1-2 \tfrac{n}{n_1}).
\end{equation}
Combining the first line in \eqref{eq:gammacosstuffdf} with  \eqref{G+andG-combinedtogetherwithepsilon} and using the parity of $r_1$, $r_2$ together with $k=2d+2+r_1+r_2$, we obtain the second line of \eqref{eq:limitQ} restricted to $n_1>0$. Similarly, combining the second line in \eqref{eq:gammacosstuffdf} with  \eqref{G+andG-combinedtogetherwithepsilon}, we obtain the second line of \eqref{eq:limitQ} restricted to $n_1<0$.

\end{proof}

 \section{Appendix: Special cases of $\mathcal{Q}(n_1,n_2)$}\label{sec:appendix}
  In the appendix, we identify some special cases where $\mathcal{Q}(n_1,n_2)$ (or, more precisely, $\, _2 F_1 (d+1,d+r_1+1;k;\cdot )$ ) simplifies to an elementary function. 
  Throughout, we assume that $d, r_1$ and $r_2$
 satisfy the conditions in \eqref{eq:cond_1}.
  
  We start by noting that the hypergeometric function $\, _2 F_1 (d+1,d+r_1+1;k;\cdot )$ is never a function from Schwarz's list \cite{schwarz1873ueber}, because its third parameter, $k$,  is an integer. 
\subsection{$d$ is a positive integer, $r_1, r_2 \in \mathbb{C}$}\label{eq:sec_sdddfkkeee}

For any $a \in \mathbb{C}$, we have  
\begin{equation}
    \, _2F_1(1,a;1;z) = (1-z)^{-a}.
\end{equation}
Together with formulas that allow to increase the first and the third parameter in hypergeometric series by natural numbers \cite[15.5.3, 15.5.6]{NIST},
 it implies that  
\begin{equation}
\, _2 F_1 (d+1,a;k; z)
\end{equation}
is an elementary function which can be expressed in terms of rational functions and logarithms. From the definition of $G_\pm$, the function $\mathcal{Q}(n_1, n_2)$ is an elementary function when $d$ is a positive integer. It does not matter whether $r_1$ and $r_2$ are integers as long as $2d+2+r_1+r_2$ is a positive integer and $d$ is an integer.
In the case when $r_1$ is an integer, it would be most convenient to take 
\begin{equation}
\, _2F_1(1,1;2;z) = -z^{-1} \log(1 - z).
\end{equation}

\subsection{$d + r_1$ is an integer (and hence, $d + r_2$ is an integer)}
If $d+r_1$ is an integer, by the symmetry of the hypergeometric function with respect to changing the first two arguments and rising the arguments as in Section \ref{eq:sec_sdddfkkeee}, 
$
    \, _2 F_1 (\cdot , d+r_1+1 ;k; z)
$
is an elementary function. If $d+r_2$ is an integer, then $d+r_1 = (2d+r_1+r_2+2)  - (d+r_2) - 2$ is also an integer, and we proceed as before.

\subsection{$d$ is a positive half-integer, $r_1$ integer, $r_2$ integer }
From \begin{equation}
    \, _2F_1\left(\frac{1}{2},\frac{1}{2};1;z\right) = \frac{2 K(z)}{\pi },
\end{equation}
where $K(z)$ is a an elliptic integral, 
and rising arguments as above, we obtain that for $d$ is half-integer, $r_1$ integer, $r_2$ integer, the convolution weight is expressed with the help of elliptic integrals.

\section*{Acknowledgements}
We would like to thank Nikolaos Diamantis, Daniele Dorigoni, Michael Green, Axel Kleinschmidt, Stephen D. Miller, Danylo Radchenko, Oliver Schlotterer, and Don Zagier for their many helpful conversations and insights. We would like to thank Axel Sorge for the help with calculations.   K.K.-L. acknowledges support from the NSF through grant DMS-2302309.  K.F. is partially funded by the Deutsche Forschungsgemeinschaft (DFG, German Research Foundation) under Germany's Excellence Strategy EXC 2044-390685587, Mathematics M\"unster: Dynamics-Geometry-Structure.

\bibliography{bib2}

@article {CGPWW2021,
    AUTHOR = {Chester, S. M. and Green, M. B. and Pufu, S. S. and
              Wang, Y. and Wen, C.},
     TITLE = {New modular invariants in {$\mathcal{N}=4$} super-{Y}ang-{M}ills
              theory},
   JOURNAL = {J. High Energy Phys.},
  FJOURNAL = {Journal of High Energy Physics},
      YEAR = {2021},
    NUMBER = {4},
     PAGES = {Paper No. 212, 56},
      ISSN = {1126-6708},
   MRCLASS = {81T60 (81T35 81T40)},
       DOI = {10.1007/jhep04(2021)212},
       URL = {https://doi.org/10.1007/jhep04(2021)212},
}

@article {DKS2021_1,
    AUTHOR = {Dorigoni, D. and Kleinschmidt, A. and Schlotterer,
              O.},
     TITLE = {Poincar\'{e} series for modular graph forms at depth two. {P}art
              {I}. {S}eeds and {L}aplace systems},
   JOURNAL = {J. High Energy Phys.},
  FJOURNAL = {Journal of High Energy Physics},
      YEAR = {2022},
    NUMBER = {1},
     PAGES = {Paper No. 133, 111},
      ISSN = {1126-6708},
   MRCLASS = {81T30 (11F11)},
       DOI = {10.1007/jhep01(2022)133},
       URL = {https://doi.org/10.1007/jhep01(2022)133},
}

@article {DKS2021_2,
    AUTHOR = {Dorigoni, D. and Kleinschmidt, A. and Schlotterer,
              O.},
     TITLE = {Poincar\'{e} series for modular graph forms at depth two. {P}art
              {II}. {I}terated integrals of cusp forms},
   JOURNAL = {J. High Energy Phys.},
  FJOURNAL = {Journal of High Energy Physics},
      YEAR = {2022},
    NUMBER = {1},
     PAGES = {Paper No. 134, 49},
      ISSN = {1126-6708},
   MRCLASS = {81T30 (11F11)},
       DOI = {10.1007/jhep01(2022)134},
       URL = {https://doi.org/10.1007/jhep01(2022)134},
}

@article{DGW,
  title={Modular Features of Superstring Scattering Amplitudes: Generalised {E}isenstein Series and Theta Lifts},
  author={Dorigoni, D. and Green, M.B. and Wen, C.},
  journal={Ann. Henri Poincar\'{e} },
  year={2025},
  publisher={Springer},
    DOI = {10.1007/s00023-025-01607-6}
}

@misc{NIST,
         key = "{\relax DLMF}",
       title = "{\it NIST Digital Library of Mathematical Functions}",
howpublished = "http://dlmf.nist.gov/, Release 1.1.6 of 2022-06-30",
         url = "http://dlmf.nist.gov/",
        note = "F.~W.~J. Olver, A.~B. {Olde Daalhuis}, D.~W. Lozier, B.~I. Schneider,
                R.~F. Boisvert, C.~W. Clark, B.~R. Miller, B.~V. Saunders,
                H.~S. Cohl, and M.~A. McClain, eds."}

@article{SDK,
  title={{The {$D^6R^4$} interaction as Poincar\'{e} series, and a related shifted convolution sum}},
  author={Klinger-Logan, K. and Miller, S. and Radchenko, D.},
  journal={preprint},
  year={2022}
}

@book{motohashi1997spectral,
  title={Spectral theory of the {R}iemann zeta-function},
  author={Motohashi, Y.},
  volume={127},
  year={1997},
  publisher={Cambridge University Press}
}

@article{murty2013ramanujan,
  title={Ramanujan series for arithmetical functions},
  author={Murty, M. R.},
  journal={Hardy-Ramanujan Journal},
  volume={36},
  year={2013},
  publisher={Episciences. org}
}

@article{ramanujan1918certain,
  title={On certain trigonometrical sums and their applications in the theory of numbers},
  author={Ramanujan, S.},
  journal={Trans. Cambridge Philos. Soc},
  volume={22},
  number={13},
  pages={259--276},
  year={1918}
}

@book{iwaniec2021spectral,
  title={Spectral methods of automorphic forms},
  author={Iwaniec, H.},
  volume={53},
  year={2021},
  publisher={American Mathematical Society, Revista Matem{\'a}tica Iberoamericana }
}

@article{kiuchi1987exponential,
  title={On an exponential sum involving the arithmetic function $\sigma_a(n)$},
  author={Kiuchi, I.},
  journal={Mathematical Journal of Okayama University},
  volume={29},
  number={1},
  pages={193--205},
  year={1987},
  publisher={Department of Mathematics, Faculty of Science, Okayama University}
}

@article{ishibashi1995,
  title={The value of the {E}stermann zeta functions at $s=0$},
  author={Ishibashi, M.},
  journal={Acta Arithmetica},
  volume={73},
  number={4},
  pages={357--361},
  year={1995}
}

@book{apostol,
  title={Introduction to analytic number theory},
  author={Apostol, T. M.},
  year={1998},
  publisher={Springer Science \& Business Media}
}

@inproceedings{motohashi1994binary,
  title={The binary additive divisor problem},
  author={Motohashi, Y.},
  booktitle={{Annales scientifiques de l'Ecole normale sup{\'e}rieure}},
  volume={27},
  number={5},
  pages={529--572},
  year={1994}
}

@article{diamantis2010kernels,
  title={Kernels of {L}-functions of cusp forms},
  author={Diamantis, N. and O’Sullivan, C.},
  journal={Mathematische Annalen},
  volume={346},
  pages={897--929},
  year={2010},
  publisher={Springer}
}

@article{kuznetsov1985convolution,
  title={{Convolution of the Fourier coefficients of the Eisenstein-Maass series}},
  author={Kuznetsov, N.V.},
  journal={Journal of Soviet Mathematics},
  volume={29},
  pages={1131--1159},
  year={1985},
  publisher={Springer}
}

@book{kontsevich2001periods,
  title={Periods},
  author={Kontsevich, M. and Zagier, D.},
  year={2001},
  publisher={Springer}
}

@article{schwarz1873ueber,
  title={{Ueber diejenigen F{\"a}lle, in welchen die Gaussische hypergeometrische Reihe eine algebraische Function ihres vierten Elementes darstellt.}},
  author={Schwarz, H. A.},
  year={1873},
  publisher={De Gruyter}
}

@article{smirnov2009resolution,
  title={{On the resolution of singularities of multiple Mellin--Barnes integrals}},
  author={Smirnov, A. V. and Smirnov, V. A.},
  journal={The European Physical Journal C},
  volume={62},
  number={2},
  pages={445--449},
  year={2009},
  publisher={Springer}
}

@book{prud,
  title={Integrals and series},
  author={Prudnikov, A.B.},
  year={2018},
  publisher={Routledge}
}

@incollection{o2023identities,
  title={Identities from the holomorphic projection of modular forms},
  author={O’Sullivan, C.},
  booktitle={Number Theory for the Millennium III},
  pages={87--106},
  year={2023},
  publisher={AK Peters/CRC Press}
}

@article{FKLR,
  title={Convolution identities for divisor sums and modular forms},
  author={Fedosova, K. and Klinger-Logan, K. and Radchenko, D.},
  journal={Proceedings of the National Academy of Sciences},
  volume={121},
  number={44},
  pages={e2322320121},
  year={2024},
  publisher={National Academy of Sciences}
}

@article{fedosova2024shifted,
  title={Shifted convolution sums motivated by string theory},
  author={Fedosova, K. and Klinger-Logan, K.},
  journal={Journal of Number Theory},
  volume={260},
  pages={151--172},
  year={2024},
  publisher={Elsevier}
}

@article{diamantis2020kernels,
  title={Kernels of {$L$}-functions and shifted convolutions},
  author={Diamantis, N.},
  journal={Proceedings of the American Mathematical Society},
  volume={148},
  number={12},
  pages={5059--5070},
  year={2020}
}
\bibliographystyle{amsplain}
 
\end{document}